\documentclass[12pt,english]{article}

\usepackage{hyperref}
\hypersetup{
    unicode      = false,
    pdftoolbar   = true,
    pdfmenubar   = true,
    pdffitwindow = true,
    pdfnewwindow = true,
    colorlinks   = true,
    linkcolor    = blue,
    citecolor    = blue,
    filecolor    = blue,
    urlcolor     = blue,
    breaklinks   = true
}
\usepackage{stmaryrd}
\SetSymbolFont{stmry}{bold}{U}{stmry}{m}{n}
\usepackage[english]{babel}
\usepackage[utf8]{inputenc}

\usepackage{tcolorbox}

\usepackage{amsmath}
\usepackage{amsthm}
\usepackage{amssymb}
\usepackage{multirow}
\usepackage{epsfig}
\usepackage{soul}
\usepackage{colortbl}
\usepackage{curves}
\usepackage{epic}
\usepackage{pgf}
\usepackage{rotating}
\usepackage{array}
\usepackage[margin=1in]{geometry}
\usepackage{times}
\usepackage{xspace}
\usepackage{xcolor}
\usepackage{mathtools}
\usepackage[nameinlink,capitalise,noabbrev]{cleveref}
\usepackage{booktabs}
\usepackage{mathtools}
\usepackage{graphicx}
\usepackage{pgfplots} 
\pgfplotsset{compat=1.18} 
\usepgfplotslibrary{groupplots}

\usepackage[numbers]{natbib}

\Crefname{subsection}{Section}{Sections}
\crefname{assumption}{Assumption}{Assumptions}
\Crefname{assumption}{Assumption}{Assumptions}
\usepackage{graphicx}
\usepackage{tikz}
\usetikzlibrary{shapes,arrows}
\usetikzlibrary{positioning,fit,backgrounds}
\usetikzlibrary{calc}
\usetikzlibrary{patterns}
\usetikzlibrary{intersections}
\usepackage[export]{adjustbox}
\usetikzlibrary{shapes.geometric}
\usetikzlibrary{shapes.misc}
\tikzset{cross/.style={cross out, draw=black, minimum size=2*(#1-\pgflinewidth), inner sep=0pt, outer sep=0pt},
cross/.default={1pt}}
\usetikzlibrary{patterns.meta}

\pgfdeclarepattern{
  name = random dots,
  type = uncolored,
  bottom left = \pgfpoint{0pt}{0pt},
  top right = \pgfpoint{8pt}{8pt},
  tile size = \pgfpoint{8pt}{8pt},
  code = {
    \pgfpathcircle{\pgfpoint{1pt}{6pt}}{0.4pt}
    \pgfusepath{fill}
    \pgfpathcircle{\pgfpoint{3pt}{2pt}}{0.35pt}
    \pgfusepath{fill}
    \pgfpathcircle{\pgfpoint{6pt}{5pt}}{0.45pt}
    \pgfusepath{fill}
    \pgfpathcircle{\pgfpoint{7pt}{1pt}}{0.3pt}
    \pgfusepath{fill}
  }
}

\usepackage{float}
\usepackage{subcaption}

\usepackage{accents}
\DeclareRobustCommand{\esh}{\accentset{\circ}{\mathbb{R}}^n_k}
\DeclareRobustCommand{\eshzero}{\accentset{\circ}{\mathbb{R}}^n_0}
\DeclareRobustCommand{\eshkplusun}{\accentset{\circ}{\mathbb{R}}^n_{k+1}}

\def\R{{\mathbb{R}}}
\def\C{{\mathcal{C}}}
\def\N{{\mathbb{N}}}

\def\K{{\mathcal{K}}}

\def\L{{\mathcal{L}}}
\def\compact{{L}}
\def\W{{\mathcal{W}}}
\def\V{{\mathbb{V}}}
\def\P{{\mathbb{P}}}

\def\deltA{{ \delta}}
\def\DeltA{{ \Delta}}

\def\increaset{{\tt  increase}}
\def\decreaset{{\tt  decrease}}

\def\T{{T^\text{H}_{\Omega}}}
\def\v{{\hat v}}

\def\nunkp1{{n^{k+1}_\text{U}}}

\newcommand{\argmin}{\mathop{\mathrm{argmin}}}
\newcommand{\nomad}{{\tt NOMAD}\xspace}
\def\mads{\textnormal{MADS}\xspace}
\def\ads{\textnormal{ADS}\xspace}
\def\madspb{\textnormal{MADS-PB}\xspace}
\def\adspb{\textnormal{ADS-PB}\xspace}

\newcommand{\solar}[1]{\textnormal{\textsf{SOLAR #1}}\xspace}
\newcommand{\styrene}{\textnormal{\textsf{STYRENE}}\xspace}

\newtheorem{theorem}{Theorem}[section]

\newtheorem{assumption}{Assumption}

\newtheorem{definition}{Definition}
\crefname{definition}{Definition}{Definitions}
\Crefname{definition}{Definition}{Definitions}

\definecolor{Red}{rgb}{1,0,0}
\definecolor{Green}{rgb}{0,.6,0}
\definecolor{Blue}{rgb}{0,0,1}

\usepackage{algorithm}             
\usepackage[commentColor=blue]{algpseudocodex}
\usepackage{tcolorbox}
\tcbuselibrary{skins,breakable}
\usepackage{xparse}
\usepackage{xparse}
\usepackage{tcolorbox}
\tcbuselibrary{skins,breakable}
\usepackage[commentColor=blue]{algpseudocodex}

\newcounter{algobox}[section]

\usepackage[textwidth=22mm, backgroundcolor=yellow, linecolor=orange, textsize=scriptsize]{todonotes}

\newcommand{\BoxRef}[1]{%
\hyperref[#1]{\fbox{Box~\ref*{#1}}}%
}

\title{
{Adaptive direct search algorithms  \\ with relaxable and quantifiable constraints}
\thanks{
{GERAD}
and D\'epartement de math\'ematiques et g\'enie industriel,
Polytechnique Montr\'eal,
C.P.~6079, Succ. Centre-ville,
Montr\'eal, Qu\'ebec, Canada H3C~3A7.   }

\author{
\href{mailto:Charles.Audet@gerad.ca}{Charles Audet}\thanks{
\href{mailto:Charles.Audet@gerad.ca}{\url{charles.audet@polymtl.ca} \hfill https://orcid.org/0000-0002-3043-5393}
}
\and
\href{mailto:theo.denorme@etud.polymtl.ca}{Th\'eo Denorme}\thanks{
\href{mailto:theo.denorme@etud.polymtl.ca}{\url{theo.denorme@etud.polymtl.ca} \hfill https://orcid.org/0009-0008-5947-4071}
} 
\and \href{mailto:youssef.diouane@polymtl.ca}{Youssef Diouane}\thanks{
\href{mailto:youssef.diouane@polymtl.ca}{\url{youssef.diouane@polymtl.ca} \hfill https://orcid.org/0000-0002-6609-7330}
} \and \href{mailto:ledigabel.sebastien@polymtl.ca}{S\'ebastien Le~Digabel}\thanks{
\href{mailto:ledigabel.sebastien@polymtl.ca}{\url{sebastien.le-digabel@polymtl.ca} \hfill https://orcid.org/0000-0003-3148-5090}
} \and \href{mailto:ledigabel.sebastien@polymtl.ca}{Christophe Tribes}\thanks{
\href{mailto:tribes.christophe@polymtl.ca}{\url{christophe.tribes@polymtl.ca} \hfill https://orcid.org/0000-0002-8740-6155}
} 
}}

\begin{document}
\maketitle


\paragraph{Abstract}
This work introduces \adspb, an extension of the Adaptive Direct Search (\ads) framework for solving  constrained blackbox optimization problems.
With \ads, iterates progress without relying on mesh structures or sufficient decrease conditions on the objective function value.
Unlike the extreme barrier approach used in \ads, where only unrelaxable constraints are considered, the proposed method also handles quantifiable and relaxable constraints using a Progressive Barrier (PB) mechanism that exploits both constraint and objective function values. A convergence analysis of the proposed framework under mild assumptions is presented. 
The performance of the proposed method is assessed using sets of analytical and simulation-based constrained test problems and is compared with state-of-the-art blackbox optimization solvers, including the PB approach within the Mesh Adaptive Direct Search (\mads) framework.


\paragraph{Keywords}
Derivative-free optimization;
Constrained optimization;
Adaptive direct search; Relaxable constraints. 

\paragraph{AMS subject classifications}
90C30, 90C56, 49J52.

\section{Introduction}

Constrained blackbox optimization arises in many applications where both the objective function and the constraints can only be accessed through evaluations, often at a significant computational cost and without derivative information. In such settings, the optimization framework must not only remain robust in the absence of smoothness assumptions but also be flexible enough to exploit informative trial points proposed by search heuristics, surrogate models, or problem-specific strategies.
The present work considers constrained blackbox optimization problems~\cite{AuHa2026} of the form
\begin{equation}
\label{eq-pb}
      \min_{x \in \Omega} f(x)
\end{equation}
where the feasible set is defined as $\Omega = \left\{ x \in X: c_j(x) \leq 0 \text{ for all } j \in J \right\}$, and $X \subseteq \R^n$ denotes the blackbox domain.
Using the taxonomy defined in~\cite{LedWild2015}, the set  $X$ incorporates the \emph{a priori known}~\cite{LedWild2015} constraints of the problem, such as bound constraints or other explicitly defined restrictions. The index set $J$ is finite and identifies the inequality constraints.
Both the objective function $f\colon X \to \overline{\R}$ and the constraint functions $\{c_j\colon X \to \overline{\R}\}_{j \in J}$ are assumed to be available only through blackbox evaluations. The use of the extended real line $\overline{\R}$ instead of $\R$ reflects the fact that, in blackbox settings, some evaluations may fail or return invalid outputs. For instance, a simulation may crash, a solver may not converge, or a function call may return an error at some trial points. In such cases, assigning the value $+\infty$ to the objective and/or constraint functions provides a convenient way to model these failed evaluations while keeping the optimization framework robust to this type of numerical issue.
The constraints are assumed to be \emph{quantifiable} and \emph{relaxable}~\cite{LedWild2015}.
They are said to be \emph{quantifiable} because each constraint function $c_j$, $j \in J$, provides a real-valued measure of violation, allowing one to assess not only feasibility but also the magnitude of infeasibility. Such constraints are said to be \emph{relaxable} because the objective may still be evaluated at infeasible points during the optimization process and used to guide the search toward feasibility, rather than being strictly excluded from the search.

\subsection{Motivation}
The present work extends the Adaptive Direct Search (\ads) framework of~\cite{denorme-ads-2025} to the class of constrained blackbox optimization problems of the form~\eqref{eq-pb}. While \ads relies on an extreme barrier~\cite{AuDe2006} to enforce feasibility, such an approach can be overly restrictive when constraint violations can be quantified and meaningfully reduced during the optimization process. 
This naturally motivates the use of a Progressive Barrier (PB) mechanism~\cite{AuDe09a}, which allows infeasible points to contribute useful information while gradually driving the optimization process toward feasibility.

More broadly, many established Derivative-Free Optimization (DFO) methods have been proposed for constrained problems, including evolution strategies~\cite{DIOUANE2021100001,cmaes}, a merit function approach for direct search~\cite{GrVi2014}, a mixed interior point method for direct search~\cite{BrCuLiSi2024}, and genetic algorithms~\cite{Deb2000}, among many others discussed in~\cite{AlAuGhKoLed2020,AuHa2026,CoScVibook,CuScVi2017,KoLeTo03a,LaMeWi2019}.
Furthermore, recent developments in DFO increasingly emphasize the use of surrogate and quadratic models to improve the quality of trial points and better exploit previously evaluated information. For instance, modern approaches combine quadratic interpolation with structured trust-region frameworks or subspace techniques to address high-dimensional or partially separable problems~\cite{Cartis2026, LiuLi2025}. Other works integrate adaptive sampling and machine learning models to improve surrogate accuracy and search efficiency~\cite{Gupta2024,Karantoumanis2024}. 
Recent model-and-search frameworks explicitly combine direct~search strategies with quadratic model construction to enhance local convergence properties while maintaining robustness in blackbox settings~\cite{Ma2025}.
These developments highlight the importance of optimization frameworks that can naturally incorporate model-based and heuristic search mechanisms without degrading the quality of the proposed trial points. 
More generally, related direct~search convergence mechanisms based on stepsize or radius updates are surveyed in~\cite{DzRiRoZe2025}.

\subsection{Contributions}
This work introduces \adspb, a PB extension of the \ads framework, designed to preserve the flexibility of \ads while allowing infeasible evaluations to be incorporated throughout the optimization process. The proposed framework is intended for blackbox settings where constraint violations carry meaningful information and can be exploited to guide the search, rather than being discarded entirely.

A central aspect of \adspb is that it removes two structural features that often limit the effective use of sophisticated search mechanisms in direct~search methods, namely the reliance on a mesh and the use of sufficient decrease conditions. By lifting these restrictions, the framework can directly exploit high-quality trial points generated by search heuristics, surrogate models, or hybrid strategies without altering them through mesh projection or rejecting them because of restrictive acceptance requirements. This makes \adspb particularly well suited to modern model-assisted DFO.

Beyond the algorithmic framework itself, this work also establishes a convergence analysis under standard assumptions of DFO, providing theoretical support for the proposed approach. Its practical relevance is further illustrated through an implementation within the \nomad software~\cite{nomad4paper} and through computational experiments on constrained analytical and blackbox test problems, where the behavior of \adspb is compared with that of existing DFO methods.

\subsection{Outline}

\Cref{sec:PB} recalls the key notions of the PB framework and fixes the terminology used throughout the paper.
\Cref{sec:ADS} presents the \adspb algorithmic framework.
The convergence analysis is given in \cref{sec:convergence}, and computational results, together with implementation aspects in \nomad, are reported in \cref{sec:numerical}.
Concluding remarks are provided in \cref{sec-discussion}.



\section{The progressive barrier approach}
\label{sec:PB}

A simple way to handle relaxable and quantifiable constraints is the two-phase extreme barrier approach:
The first phase ignores the objective function and attempts to find a feasible point by minimizing a measure of constraint violation.
The second phase focuses on reducing the objective function over the feasible set. 
This approach, although straightforward and simple to implement, might discard useful information from objective function evaluations at infeasible points.
The PB approach~\cite{AuDe09a} is designed to unify these two phases by allowing the algorithm to improve both feasibility and the objective while progressively restricting the admissible level of constraint violation.

\subsection{Constraint violation function and incumbent solutions}
\label{sub:incumbent}

The PB is a framework that may be combined with different direct~search mechanisms: it can be coupled with a mesh structure, as in~\cite{AuDe09a}, but it also admits alternative realizations, for instance, merit-function-based approaches that balance objective decrease and feasibility improvement~\cite{AuCoLedPey2016}.
Constraints violations are aggregated using the {\em constraints violation function} $h:\R^n \to \overline{\R}^+$ defined by
\begin{equation*}
\label{eq:constraint-violation}
h(x)\ =\ 
\begin{cases}
\displaystyle \sum_{j\in J}\max(0,c_j(x))^2, & \text{if } x\in X,\\
+\infty, & \text{otherwise}.
\end{cases}
\end{equation*}
With this notation, the feasible set may be written as  $\Omega = \{x\in X:\ h(x)=0\}$.

The main idea behind the PB approach is to use a \emph{barrier threshold} $h^k_{\max}$ on the constraint violation function at each iteration $k$. 
The barrier threshold is initialized to $+\infty$. 
As the algorithm is deployed, when a candidate solution has a constraint violation function value that exceeds the current threshold, it is considered inadmissible for replacing one of the incumbents. 
The threshold is then progressively decreased, making the algorithm increasingly selective with respect to feasibility.
Candidate solutions are compared against the current incumbents, which may be updated between successive iterations. Such comparisons are based either solely on the objective function value when the candidate is feasible or on both the objective function and constraint violation function values when the candidate is infeasible, as follows:
\begin{itemize}
\item
A feasible point $x \in \Omega$ is preferred over $y \in \Omega$ (denoted by $x \prec_{\mathrm{feas}} y$),  if it improves the value of $f$ (i.e., $f(x) < f(y)$).
\item
An infeasible point $x \in X \setminus \Omega$ is preferred over $y \in X \setminus \Omega$ (denoted by $x \prec_{\mathrm{inf}} y$) if it improves at least one of $f$ or $h$ without worsening the other, i.e., either $f(x) < f(y)\ \text{and}\ h(x) \le h(y) \text{, or,  }\ f(x) \le f(y)\ \text{and}\ h(x) < h(y)$.
\end{itemize}
In both situations, 
    if $x \prec_{\mathrm{feas}} y$ or $x \prec_{\mathrm{inf}} y$, 
    then $x$ is said to \emph{dominate} $y$, the notation is abbreviated by $x \prec y$.
The PB approach stores and orders the list of all trial points that were evaluated.
Let $\C^k$, referred to as the \emph{cache}, denote the set of all trial points where the objective and constraint functions were evaluated up to the start of iteration $k \in \N$.
Any point in $\C^k$
is called a \emph{visited point}.
The algorithm maintains up to two types of incumbent solutions.
The set of {\em feasible incumbents} is defined as 
\begin{equation}
\label{eq:Fk}
F^k \ :=\ \argmin_{x\in\C^k}\{f(x)\ :\  h(x)=0\}.
\end{equation}
This set is empty when the cache $\C^k$ does not contain any feasible point.
The second set of incumbents relies on the barrier threshold $h^k_{\max}$ and 
 on the set of infeasible non-dominated points, i.e.,
\begin{equation*}
\label{eq:Uk}
U^k := \Bigl\{ x\in \C^k \setminus \Omega \ : \text{ there exists no } y\in \C^k\setminus \Omega \text{ such that }\ y \prec_{\mathrm{inf}} x\Bigr\}.
\end{equation*}
The set of {\em infeasible incumbents} is defined as
\begin{equation}
\label{eq:Ik}
I^k \ :=\ \argmin_{x\in U^k}\{f(x) \ :\  0<h(x)\le h^k_{\max}\}.
\end{equation}
With these definitions,
$F^k$ contains the feasible points visited so far with the least value of $f$,
 and $I^k$ contains the infeasible non-dominated points visited so far 
 (whose constraint violation function value is less than the barrier threshold) that have the least value of $f$.

These two sets of incumbents ensure that the algorithm reports a meaningful incumbent: either a best feasible point when feasibility has been reached, or a best infeasible point when it has not.
To summarize the information carried by these sets, the framework associates up to two representative incumbent points at iteration $k$. If $F^k\neq\emptyset$, \textit{a feasible incumbent} $x^k_{\mathrm{F}}\in F^k$ is selected, and its value is denoted by $f_{\mathrm{F}}^k$, i.e.,
\begin{equation} \label{eq:xkF}
 x^k_{\mathrm{F}}\in F^k  \quad \mbox{and} \quad  f_{\mathrm{F}}^k \ := \ f\!\left(x^k_{\mathrm{F}}\right).
\end{equation}
If $I^k\neq\emptyset$ is selected, an \textit{infeasible incumbent} $x^k_{\mathrm{I}}\in I^k$ is chosen, together with its pair of values $(f_{\mathrm{I}}^k,h_{\mathrm{I}}^k)$, i.e.,
\begin{equation} \label{eq:xkI}
   x^k_{\mathrm{I}}\in I^k \quad \mbox{and} \quad (f_{\mathrm{I}}^k,h_{\mathrm{I}}^k)
\ :=\ \bigl(f(x^k_{\mathrm{I}}),\,h(x^k_{\mathrm{I}})\bigr).
\end{equation}
In the case where $F^k$ or $I^k$ is not a singleton, the incumbents $x^k_{\mathrm{F}} \in F^k$ and $x^k_{\mathrm{I}} \in I^k$ may be chosen arbitrarily. These incumbents will serve as reference points to determine the outcome of iteration $k$: a new trial point at which $f$ is evaluated is compared against $x^k_{\mathrm{F}}$ if it is feasible and against $x^k_{\mathrm{I}}$ if it is infeasible. 

\subsection{Dominating, improving and unsuccessful iterations}
During iteration $k$, trial points may improve the current incumbents in different ways.
A first possibility is through a \emph{dominating} point. 
This includes the case of a feasible trial point whose objective function value is strictly less than $f^k_{\mathrm{F}}$, when $F^k \neq \emptyset$, or any feasible trial point when $F^k = \emptyset$. 
Similarly, an infeasible trial point is considered dominating if it dominates $x^k_{\mathrm{I}}$ when $I^k \neq \emptyset$, or if it is the first infeasible point found when $I^k = \emptyset$. 
In all these cases, the iteration is declared \emph{dominating}.

All iterations do not produce dominating points, especially before feasibility is reached. 
In that case, the PB still allows the algorithm to make progress by focusing on feasibility: in the situations where iteration $k$ is not dominating
, the algorithm may accept an \emph{improving} point, that is, an infeasible visited point with a strictly smaller constraint violation function value than that of the current infeasible incumbent:
an {\em improving point} $x\in X$ satisfies the following inequalities:
\begin{equation*}
    0\ <\ h(x)\ <\ \!h^k_\text{I}. 
\end{equation*}
Any dominating or improving point is called a \textit{successful point}.
Finally, an iteration that is neither dominating nor improving is said to be {\em unsuccessful}.

\Cref{fig:hf} illustrates the three iteration types in the $(f,h)$-plane. 
The feasible incumbent $x^k_{\mathrm{F}}$ lies on the vertical axis $h=0$, while the infeasible incumbent $x^k_{\mathrm{I}}$ has coordinates $(f_{\mathrm{I}}^k,h_{\mathrm{I}}^k)$ with $h_{\mathrm{I}}^k>0$. The dominance relation partitions the plane into three regions.

\begin{figure}[htb!]
\centering

\setlength{\tabcolsep}{2pt}
\newcommand{\HFsetup}{%
  \def\xmax{4.2}%
  \def\ymax{4.6}%
  \def\ymin{-1}%
  \def\hI{1.6}%
  \def\fF{2.8}%
  \def\fI{1.4}%
  \def\hmax{3.3}%
  \def\xmid{5.0}%
  \def\scale{1}
}
\begin{tabular}{ccc}

\HFsetup
\begin{tikzpicture}[
    >=stealth,
    line cap=round,
    line join=round,
    scale=0.95\scale
]

\node[anchor=north, font=\small] at ({0.5*\xmax}, {\ymax+1}) {Dominating};

\draw[green, fill=green!20] (0,\ymin) rectangle (\hI,\fI);

\draw[dashed, thick] (\hmax,\ymin) -- (\hmax,\ymax);
\node[
    fill=white,
    fill opacity=0.9,
    text opacity=1,
    inner sep=2pt,
    anchor=west
] at (\hmax+0.05,\ymax-0.1) {$h^k_{\max}$};

\draw[->, thick] (0,\ymin) -- (0,\ymax) node[above left] {$f$};
\draw[->, thick] (0,0) -- (\xmax,0) node[below right] {$h$};

\draw[line width=2pt,green!40!black] (0,\ymin) -- (0,\fF);
\draw[line width=2pt,green!40!black] (\hI,\fI) -- (0,\fI);
\draw[line width=2pt,green!40!black] (\hI,\fI) -- (\hI,\ymin);
\node[
    anchor=north west,
    fill=white,
    inner sep=2pt
] at (0,\ymin) {$h=0$};

\coordinate (xF) at (0,\fF);
\coordinate (xI) at (\hI,\fI);

\filldraw[fill=white] (xF) circle (1.8pt);
\node[left] at (xF) {$x^k_{\mathrm{F}}$};

\filldraw[fill=white] (xI) circle (1.8pt);
\node[
    fill=white,
    fill opacity=0.9,
    text opacity=1,
    inner sep=2pt,
    anchor=west
] at (\hI+0.1,\fI) {$x^k_{\mathrm{I}}$};

\end{tikzpicture}

&
\HFsetup
\definecolor{babyblue}{rgb}{0.54, 0.81, 0.94}
\begin{tikzpicture}[
    >=stealth,
    line cap=round,
    line join=round,
    scale=0.95\scale
]

\node[anchor=north, font=\small] at ({0.5*\xmax}, {\ymax+1}) {Improving};



\draw[ fill=babyblue, draw=none] (0,\fI) rectangle (\hI,\ymax);

\draw[dashed, thick] (\hmax,\ymin) -- (\hmax,\ymax);
\node[
    fill=white,
    fill opacity=0.9,
    text opacity=1,
    inner sep=2pt,
    anchor=west
] at (\hmax+0.05,\ymax-0.1) {$h^k_{\max}$};

\draw[->, thick] (0,\ymin) -- (0,\ymax) node[above left] {$f$};
\draw[->, thick] (0,0) -- (\xmax,0) node[below right] {$h$};

\draw[line width=2pt, dash pattern=on 0pt off 4pt] (\hI,\fI) -- (\hI,\ymax);
\draw[line width=2pt, dash pattern=on 0pt off 4pt] (\hI,\fI) -- (0,\fI);
\draw[line width=2pt, dash pattern=on 0pt off 4pt] (0,\ymin) -- (0,\ymax);

\node[
    anchor=north west,
    fill=white,
    inner sep=2pt
] at (0,\ymin) {$h=0$};

\coordinate (xF) at (0,\fF);
\coordinate (xI) at (\hI,\fI);

\filldraw[fill=white] (xF) circle (1.8pt);
\node[left] at (xF) {$x^k_{\mathrm{F}}$};

\filldraw[fill=white] (xI) circle (1.8pt);
\node[
    fill=white,
    fill opacity=0.9,
    text opacity=1,
    inner sep=2pt,
    anchor=west
] at (\hI+0.1,\fI) {$x^k_{\mathrm{I}}$};

\end{tikzpicture}

&
\HFsetup
\begin{tikzpicture}[
    >=stealth,
    line cap=round,
    line join=round,
    scale=0.95\scale
]

\node[anchor=north, font=\small] at ({0.5*\xmax}, {\ymax+1}) {Unsuccessful};


\draw[ fill=red!20, draw=none] (\hI,\ymin) rectangle (\xmax,\ymax);

\draw[dashed, thick] (\hmax,\ymin) -- (\hmax,\ymax);
\node[
    fill=white,
    fill opacity=0,
    text opacity=1,
    inner sep=2pt,
    anchor=west
] at (\hmax+0.05,\ymax-0.2) {$h^k_{\max}$};

\draw[->, thick] (0,\ymin) -- (0,\ymax) node[above left] {$f$};
\draw[->, thick] (0,0) -- (\xmax,0) node[below right] {$h$};

\draw[line width=2pt,red!40!black] (0,\ymax) -- (0,\fF);
\draw[line width=2pt,red!40!black] (\hI,\fI) -- (\hI,\ymax);
\draw[line width=2pt, dash pattern=on 0pt off 4pt] (\hI,\fI) -- (\hI,\ymin);

\node[
    anchor=north west,
    fill=white,
    inner sep=2pt
] at (0,\ymin) {$h=0$};

\coordinate (xF) at (0,\fF);
\coordinate (xI) at (\hI,\fI);

\fill (xF) circle (1.8pt);
\node[left] at (xF) {$x^k_{\mathrm{F}}$};

\fill (xI) circle (1.8pt);
\node[
    fill=white,
    fill opacity=0,
    text opacity=1,
    inner sep=2pt,
    anchor=west
] at (\hI+0.1,\fI) {$x^k_{\mathrm{I}}$};

\end{tikzpicture}

\end{tabular}

\caption{Partition
of the $(h,f)$-plane induced by the infeasible incumbent $x^k_{\mathrm{I}}$, the feasible incumbent $x^k_{\mathrm{F}}$, and the barrier threshold $h^k_{\max}$. Solid bold lines and filled points are included in the shaded region, whereas dotted bold lines are excluded.
}
\label{fig:hf}
\end{figure}

The \emph{dominating} region consists of points that dominate the current reference (either feasible points on $h=0$ with $f<f_{\mathrm{F}}^k$, highlighted in green on the leftmost figure, or infeasible points with $h\le h^k_{\mathrm{I}}$ and $f\le f^k_{\mathrm{I}}$ with at least one strict inequality). 
The \emph{improving} region corresponds to infeasible points that strictly decrease the violation, $0<h<h^k_{\mathrm{I}}$, even if $f$ is not improved.
It is illustrated in blue
in the central figure.
Finally, points that fall outside these two regions are \emph{unsuccessful} for iteration $k$.
The region is delimited in red in the rightmost figure.
The barrier threshold $h^k_{\max}$ appears as a vertical line: it indicates the current admissible range of constraint violation for selecting the infeasible incumbents, and it helps visualize how the method becomes more and more restrictive as it progresses.

The barrier threshold parameter $h^k_{\max}$ monotonically evolves 
depending on whether the iteration is dominating, improving, or unsuccessful. The update procedure for $h^k_{\max}$ satisfies the following rules:
\begin{equation}
\label{eq:update:hmax}
    h^{k+1}_\text{max} \ = \ 
    \begin{cases}
        \displaystyle\max_{x\in \C^{k+1}} \{h(x) : 0<h(x)< h^k_\text{max} \}
        & \text{if the iteration is improving}, \\[2ex]
        h^{k+1}_\text{I}
        & \text{otherwise}.
    \end{cases}
\end{equation}

The parameter $h^k_\text{max}$ gradually decreases toward zero and determines the tolerance allowed for constraint violations. 
When the iteration is improving, the idea is to push the infeasible incumbent values toward the feasible region even when iteration $k$ does not generate a point that dominates the current incumbents. 
After a dominating or unsuccessful iteration, this parameter is set equal to $h^{k}_\text{I}$.
In an improving iteration, the existence of a visited point $t$ such that $0<h(t)<h(x^k_\text{I})$ implies that $U^k\setminus \{x^k_\text{I}\}$ is nonempty. 
It is therefore possible to reduce the threshold $h^k_\text{max}$ while keeping at least one admissible infeasible point. 
The chosen strategy is to set $h^k_\text{max}$ to the largest value of $h(x)$ that is strictly smaller than the previous threshold and such that $x$ is an improving point. Setting $h^k_\text{max}=0$ would reduce the problem to the minimization of the \textit{extreme barrier function} $f_\Omega$, where $f_\Omega=f$ on $\Omega$ and $f_\Omega=+\infty$ elsewhere.

The general PB framework 
is summarized in \cref{algo-pb}.
In the step 2, the set $\mathbb{Y}^k$ of points generated at iteration $k$ is arbitrary.

\begin{algorithm}[htb!]
\caption{The Progressive Barrier (PB) framework.}
\label{algo-pb}
\begingroup\small
\begin{algorithmic}[0]
\newcommand{\step}[1]{\Statex{\color{blue}\bfseries$\triangleright$ #1}}

\Statex\smallskip {\color{blue}\textbf{$\triangleright$ Step~0. Initialization:}}
\Statex
\hspace{0.2em}%
\begin{minipage}{\dimexpr\linewidth-1.5em\relax}
\begin{tcolorbox}[
  enhanced,
  colback=white,
  boxrule=0pt,
  frame hidden,
  borderline west={0.3pt}{0pt}{blue},
  sharp corners,
  left=0em,
  right=0pt,
  top=0pt,
  bottom=0pt,
  boxsep=0pt
]
\begin{algorithmic}

\Statex\smallskip  Initialize $\C^0\gets \{x^0\}$, set $h^0_{\max}\gets +\infty$ and $k\gets 0$
\end{algorithmic}
\end{tcolorbox}
\end{minipage}

\Statex\smallskip {\color{blue}\textbf{$\triangleright$ Step~1. Incumbents definitions:}}

 \Statex
\hspace{0.2em}%
\begin{minipage}{\dimexpr\linewidth-1.5em\relax}
\begin{tcolorbox}[
  enhanced,
  colback=white,
  boxrule=0pt,
  frame hidden,
  borderline west={0.3pt}{0pt}{blue},
  sharp corners,
  left=0em,
  right=0pt,
  top=0pt,
  bottom=0pt,
  boxsep=0pt
]
\begin{algorithmic}

\Statex\smallskip  
Set $x^k_{\text{F}}$ and $x^k_{\text{I}}$ according to \eqref{eq:xkF} and \eqref{eq:xkI}, respectively.

\end{algorithmic}
\end{tcolorbox}
\end{minipage}

\Statex\smallskip {\color{blue}\textbf{$\triangleright$ Step~2. Function evaluations:}}
 \Statex
\hspace{0.2em}%
\begin{minipage}{\dimexpr\linewidth-1.5em\relax}
\begin{tcolorbox}[
  enhanced,
  colback=white,
  boxrule=0pt,
  frame hidden,
  borderline west={0.3pt}{0pt}{blue},
  sharp corners,
  left=0em,
  right=0pt,
  top=0pt,
  bottom=0pt,
  boxsep=0pt
]
\begin{algorithmic}

\Statex\smallskip 
Generate $\mathbb{Y}^k$, evaluate $(f(y),h(y))$ for all $y\in\mathbb{Y}^k$, and set $\C^{k+1}\gets \C^k \cup \mathbb{Y}^k$

\end{algorithmic}
\end{tcolorbox}
\end{minipage}


\Statex\smallskip {\color{blue}\textbf{$\triangleright$ Step~3. Update step:}}
 \Statex
\hspace{0.2em}%
\begin{minipage}{\dimexpr\linewidth-1.5em\relax}
\begin{tcolorbox}[
  enhanced,
  colback=white,
  boxrule=0pt,
  frame hidden,
  borderline west={0.3pt}{0pt}{blue},
  sharp corners,
  left=0em,
  right=0pt,
  top=0pt,
  bottom=0pt,
  boxsep=0pt
]
\begin{algorithmic}

 \Statex   Declare iteration $k$ $\left\{ \begin{array}{ll}
        \textit{dominating} & \mbox{if there exists a dominating point } y\in\C^{k+1}\\
        \textit{improving} & \mbox{else, if there exists an improving point } y\in\C^{k+1}\\
        \textit{unsuccessful} & \mbox{otherwise} \\
    \end{array}\right.$

\Statex Update $h^{k+1}_{\max}$ according to~\eqref{eq:update:hmax}

\end{algorithmic}
\end{tcolorbox}
\end{minipage}

\Statex\smallskip {\color{blue}\textbf{$\triangleright$ Step~4. Termination:}}
 \Statex
\hspace{0.2em}%
\begin{minipage}{\dimexpr\linewidth-1.5em\relax}
\begin{tcolorbox}[
  enhanced,
  colback=white,
  boxrule=0pt,
  frame hidden,
  borderline west={0.3pt}{0pt}{blue},
  sharp corners,
  left=0em,
  right=0pt,
  top=0pt,
  bottom=0pt,
  boxsep=0pt
]
\begin{algorithmic}  \Statex \textbf{if} no termination test is triggered \textbf{then } $k\gets k+1$ and return to \color{blue}\textbf{Step~1}
\end{algorithmic}
\end{tcolorbox}
\end{minipage}

\end{algorithmic}
\endgroup
\end{algorithm}


\subsection{A motivational toy example}
While the PB provides a general framework to handle constraints, it does not, by itself, ensure convergence; such guaranties are inherited from the optimization algorithm in which it is embedded. 
With the Mesh Adaptive Direct Search (\mads) algorithm~\cite{AuDe09a}, these guaranties rely on the use of a mesh that discretizes the search space and governs the generation of trial points. 
This dependence on the mesh, although central to the convergence analysis, can also introduce limitations.
The \mads candidate points are generated on a discretization of the space, controlled by a mesh size parameter. When the algorithm proposes a point that is not on the current mesh, the point is projected onto a nearby mesh point before evaluation. 
This mechanism is central in \mads, but it can impede the benefit of accurate model-based information: even if a local model suggests a trial point close to a local minimizer $x^*$, the mesh projection may move it away, and the algorithm would need to undergo many mesh refinements before evaluating $f$ on a trial point close to $x^*$. 

To illustrate this last observation, consider the  smooth convex problem in $\R^2$
\begin{equation*}
    \min_{(x_1,x_2)\in\R^2}\ f(x_1,x_2)=x_1^2+x_2^2
\quad \text{s.t.}\quad
x_1-3x_2\le 0
\text{ and } \
x_2-3x_1\le 0,
\end{equation*}
whose unique optimal solution is $x^\star=\left(0,0\right)$. \Cref{fig:comparison_plots} illustrates the obtained results of two DFO algorithms using the starting point $(5/3,5/3)$.
In~\cref{fig:evaluations_comparison}, the feasible domain $\Omega$ is represented in green. 
After a small number of function evaluations, 
    a quadratic model can be built to locate exactly the optimal solution $x^\star$. 
With \madspb~\cite{AuDe09a} (i.e. \mads with the PB),
    all trial points need to be projected onto a mesh,
    which forces additional evaluations until the mesh becomes fine enough to locate $x^\star$ with sufficient precision. 
The left part of the figure shows the feasible incumbent solutions produced by \madspb and by a model-based algorithm (MBA), which does not project trial points onto a mesh.
In this setting, the MBA builds local quadratic models of the objective and constraint functions from previously visited points, and proposes trial points by solving the resulting surrogate problem. 
In this example, as shown in~\cref{fig:convergence_comparison}, \madspb~fails to evaluate $f$ on the exact optimal point, while the MBA reaches $x^\star$ in fewer than ten evaluations.

\begin{figure}[htb!]
\centering
\begin{subfigure}[t]{0.50\textwidth}
    \centering
    \includegraphics[width=\linewidth]{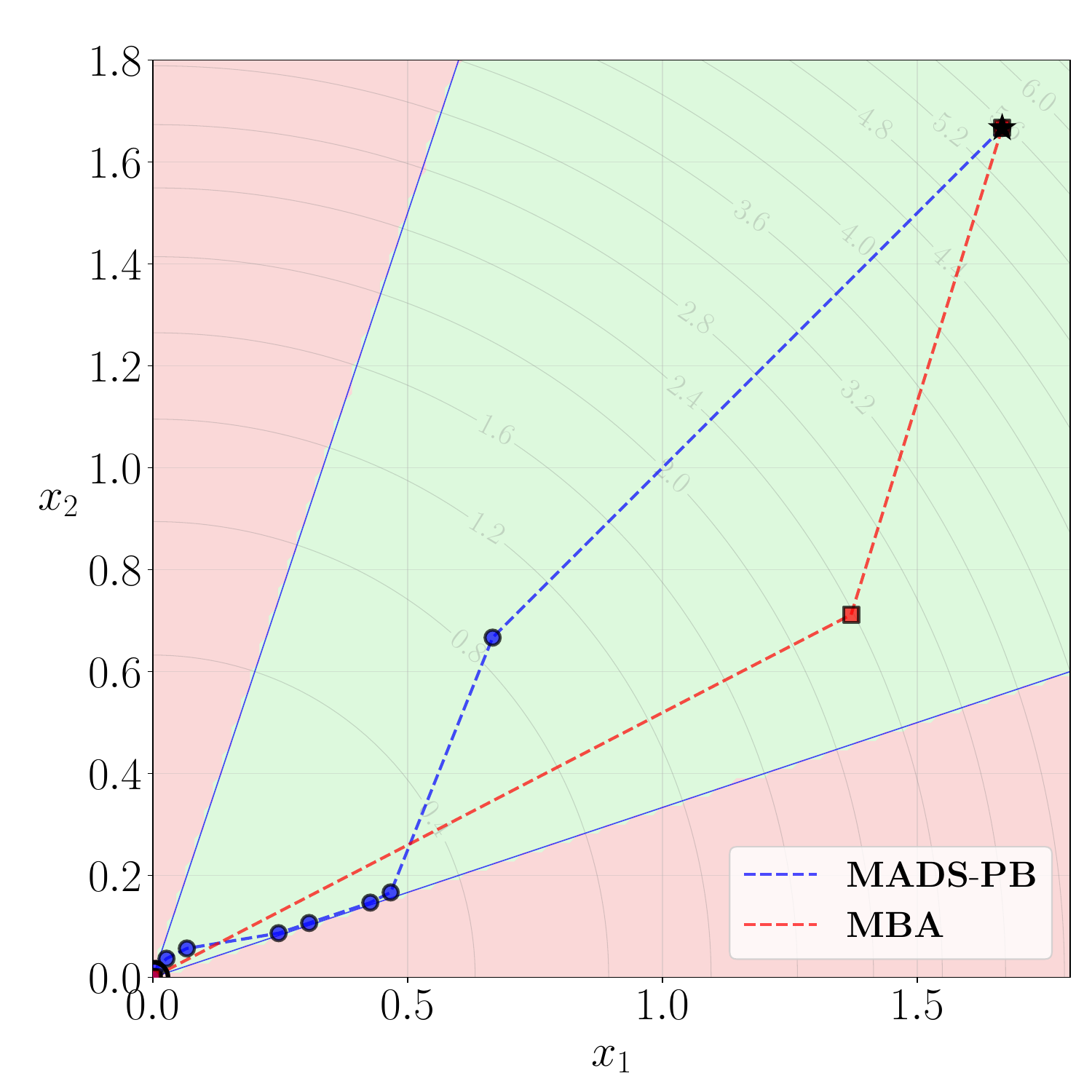}
    \caption{Trajectory plots.}
    \label{fig:evaluations_comparison}
\end{subfigure}
\hfill
\begin{subfigure}[t]{0.475\textwidth}
    \centering
    \includegraphics[width=\linewidth]{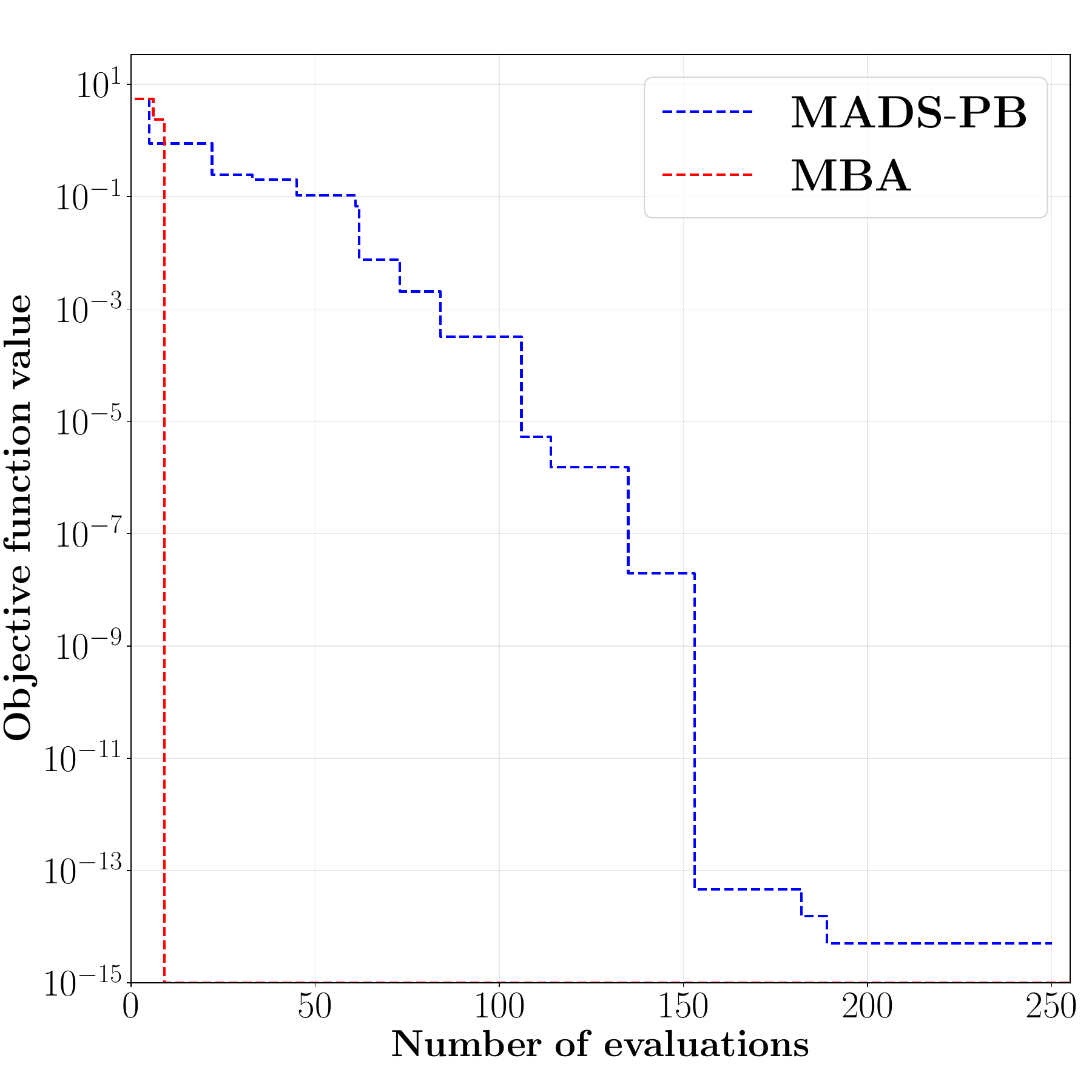}
    \caption{Convergence plots.}
    \label{fig:convergence_comparison}
\end{subfigure}
\caption{Trajectory and convergence plots for the MADS-PB method and a model-based algorithm (MBA).}
\label{fig:comparison_plots}
\end{figure}

\section{Adaptive direct search with a progressive barrier}
\label{sec:ADS}
This section describes the main contributions of this work: the \adspb algorithm,
based on the general PB framework described in \cref{algo-pb}.
The \adspb algorithm extends \ads~\cite{denorme-ads-2025} to constrained blackbox optimization problems with quantifiable and relaxable constraints by incorporating the PB mechanism of \madspb~\cite{AuDe09a}. 
The main purpose is to retain the PB framework while replacing the mesh-based structure of \madspb by the mesh-free structure of \ads.
In particular, the step~2 is decomposed into the \textit{Search} and \textit{Poll steps}.  
In addition, a fourth iteration status is introduced in the \textit{Update step}:
    iteration $k$ can be declared {\em reframing}. 
The overall structure of the method is summarized in \cref{algo-adspb}.

\subsection{The unrestricted search step}
\label{subsec-search}

The \emph{search} step at iteration $k$ of \adspb produces a finite, possibly empty set of trial points in $X$ denoted by $\mathbb{S}^k$.
The generation of these points is flexible and may exploit information from previously visited points:
let $\V^k_\text{succ}$ denote the set of successful visited points up to the start of iteration $k$. The first set $\V^0_\text{succ}$ is initialized to $\{x^0\}$ where $x^0\in X$.

Unlike \madspb, the search set $\mathbb{S}^k$ is not restricted to a mesh but to a punctured space consisting of $\R^n$  deprived of balls around all points of $\V^k_\text{succ}$.
The radii of the balls are determined by the {\em exclusion size parameter} $\delta^k>0$, 
 which is updated at the end of each iteration.
This parameter plays a role similar to that of the \mads mesh size parameter.

\begin{definition}\label{def:esh-NEW}
The {\em punctured space} $\esh$ at iteration $k$ of \adspb is the set of points in $\R^n$ that are not within $\deltA^k$ of $\V^k_\text{succ}$, i.e.,
\begin{equation}\label{eq:esh}
\esh \ := \ \left\{ x \in \R^n : \Vert x-y\Vert \geq \deltA^k \mbox{ for all } y \in \V^k_\text{succ} \right\}\ .
\end{equation}
\end{definition}

With \adspb, there are three possible outcomes of the search step:

\begin{enumerate}
    \item A dominating trial point $t_\text{search}\in\mathbb{S}^k$ inside $\esh$ is found: As soon as this occurs, in \textit{opportunistic} settings the search is interrupted, the iteration is declared \textit{dominating}, \adspb skips the poll step, and pursues to the update step. Note that during the poll step the notion of dominating uses $p^k_\text{F}$ and $p^k_\text{I}$ as reference instead of $x^k_\text{F}$ and $x^k_\text{I}$.
    \item A dominating trial point $t_\text{search}\in\mathbb{S}^k$ outside $\esh$ is found: As soon as this occurs, the search is interrupted and \adspb performs the poll step around $t_\text{search}$ (described in the next subsection).
    The iteration will be declared \textit{dominating} if the poll step identifies a dominating point, and \textit{reframing} if the poll step fails to do so.
    \item All trial points of $\mathbb{S}^k$ are evaluated, and none of them is dominating.\\
    \adspb performs the poll step around the incumbent(s) solution(s) (described in the next subsection).
    The poll step will determine if the iteration is declared \textit{dominating}, \textit{improving} or \textit{unsuccessful}.
\end{enumerate}

The set of all points on which $f$ is evaluated by the algorithm at the search step of iteration $k$ is denoted $\mathbb{S}^k_\text{eval}$.
This set might be a strict subset of $\mathbb{S}^k$ when the search step is opportunistically interrupted.

\subsection{The poll step confined to the punctured space}
\label{subsec-poll}
The \emph{poll} step is a local exploration around the best feasible and infeasible solutions found so far.  
The algorithm defines the \textit{poll centers} as follows:

\begin{definition}[Poll centers]
\label{def:poll:centers}
The feasible and infeasible poll centers are defined as
\begin{equation*}
p_{\textnormal{F}}^k=
\begin{cases}
t_{\textnormal{search}} & \text{if } t_{\textnormal{search}}\in\Omega \text{ and dominating},\\
x_{\textnormal{F}}^k, & \text{otherwise if defined.}
\end{cases}
\
p_{\text{I}}^k=
\begin{cases}
t_{\textnormal{search}}, & \text{if } t_{\textnormal{search}}\notin\Omega \text{ and dominating},\\
x_{\textnormal{I}}^k, & \text{otherwise if defined.}
\end{cases} 
\end{equation*}
\end{definition} 
In other words, at the beginning of the poll step of iteration $k$, $p_{\text{F}}^k$ denotes the best feasible point identified so far, while $p_{\text{I}}^k$ denotes the best infeasible point identified so far among those satisfying $h(x)\leq h_{\max}^k$.
Moreover, there are either one or two poll centers since $F^k\cup I^k$ is nonempty (since $x^0 \in F^0 \cup I^0$).

Once that the poll centers are determined, the poll set $\mathbb{P}^k$ is built from the sets $\mathbb{D}^k_\text{I}$ and $\mathbb{D}^k_\text{F}$ of normalized \textit{poll directions}.
These directions, i.e., unit vectors defining the directions along which trial points are generated, are  
selected to form a positive spanning set~\cite{Davi54b}, ensuring adequate coverage of the local region around $p^k_\text{F}$ and $p^k_\text{I}$. 
The \textit{tentative poll points} are then obtained by moving from the poll centers along these directions, scaled by the \textit{frame size parameter} $\Delta^k \in \R_+^*$, introduced in~\cite{AuDe2006}, since this parameter plays the same role as in \madspb.
The poll set is defined as

$$ \P^k \ := \left\{ \renewcommand{\arraystretch}{1.2}
\begin{array}{cccll}
         \{p_\text{F}^k + \Delta^k v : v \in \mathbb{D}_\text{F}^k\} &&&&\text{ if } \ I^k =\emptyset, \\
         \{p_\text{I}^k + \Delta^k v  : v \in \mathbb{D}_\text{I}^k\}&& &&\text{ if } \ F^k =\emptyset, \\
         \{p_\text{F}^k + \Delta^k v : v \in \mathbb{D}_\text{F}^k\} &\cup& \{p_\text{I}^k + \Delta^k v : v \in \mathbb{D}_\text{I}^k\} &&\text{ otherwise. } 
     \end{array} \right. 
$$

As with \ads, the parameter $\Delta^k$ determines the distance between the trial points in $\P^k$ and the poll centers $p_\text{I}^k$ and $p_\text{F}^k$.  
Because the poll directions are normalized, every poll point lies exactly at distance $\Delta^k$ from its associated poll center.  
When $\Delta^k$ decreases, the tentative poll points move closer to $p_\text{I}^k$ and $p_\text{F}^k$, allowing finer local improvements.  
All tentative poll points in $\P^k$ that fall outside the punctured space $\esh$ are discarded without evaluation, as such points are within distance $\delta^k$ of some element of $\V_\text{succ}^k$.  

There are two possible outcomes of the poll step:
\begin{enumerate}
    \item The poll step successfully produces a trial point  $t_\text{poll} \in \esh \cap \P^k$ that is dominating. In an opportunistic setting, the poll step is immediately interrupted.
    \item After evaluating all points of $ \esh \cap \P^k$, no dominating trial point is found. 
\end{enumerate}
In the situation where the search step of iteration $k$ visits a feasible or infeasible dominating point $t_\text{search}\in \mathbb{S}^k_\text{eval}$ outside of $\esh$, then either $p^k_\text{F} = t_\text{search}$ or $p^k_\text{I} = t_\text{search}$. 
Two situations may occur. If the subsequent poll step fails to identify a dominating point, then the iteration is declared \textit{reframing} and $t_\text{search}$ becomes one of the next incumbents. 
Alternately, if the poll step produces a dominating point $t_\text{poll}$, the iteration is declared \textit{dominating} and one incumbent is updated to $t_\text{poll}$. 
In both cases, at least one incumbent is updated at iteration~$k$.

Denote by $\P^k_\text{eval}$ the set of tentative poll points at which $f$ is evaluated during the poll step.  
Once this step is completed, an update step is performed.

\subsection{Decision and parameters update}
\label{subsec-update}
The \textit{update} step is the final stage of iteration $k$. This step consists in analyzing the visited points during iteration $k$ and the points from $\mathbb{C}^k$ in order to update all the parameters of \adspb. 
When either the search or the poll step produced a dominating point in $\esh$, then iteration $k$ is declared \emph{dominating}. Otherwise, before declaring the iteration unsuccessful or reframing, the algorithm checks whether iteration $k$ can be declared \emph{improving}.

At iteration $k$, $\mathbb{C}^k \cup \mathbb{S}^k_\text{eval} \cup \mathbb{P}^k_\text{eval}$
denotes the set gathering all points visited before iteration $k$ together with the trial points visited during the search and poll steps of iteration $k$.
Iteration $k$ is said to be \textit{improving} if there exists a non dominating point $t_\text{update} \in \C^k \cup \mathbb{S}^k_\text{eval} \cup \mathbb{P}^k_\text{eval}$ such that
\begin{equation}
     t_\text{update}\in \esh \ \text{ and }\  0 < h(t_\text{update}) < h\!\left(x^k_\text{I}\right).
\end{equation} 
By construction, the point $t_\text{update}$ must be an infeasible point that is strictly better than the current infeasible incumbent $x^k_\text{I}$ in terms of constraint violation. Such a point is called an \textit{improving point}. 

Finally, if an iteration is neither dominating nor improving, then it is declared \textit{reframing} whenever the search step has produced a dominating point outside $\esh$, and \textit{unsuccessful} otherwise.
This terminology (i.e., a \textit{reframing} iteration) is motivated by the fact that, even if no dominating point is found within the punctured space, a dominating search point outside $\esh$ changes the incumbents. 
As a result, the poll centers as well as the corresponding frames are repositioned. 
The update procedure for the barrier threshold parameter $h^k_{\max}$ follows the PB rule of~\cite{AuDe09a} and is recalled in \cref{sec:PB}, see~\eqref{eq:update:hmax}.
In particular, the threshold is aligned with the current infeasible incumbent after a dominating, reframing or unsuccessful iteration, while an improving iteration decreases the threshold to the largest violation level strictly below the previous one among the infeasible visited points.

The update procedure for the frame size and exclusion parameters is defined as follows:
\begin{equation}
\label{def:update:deltas}
    (\DeltA^{k+1},\deltA^{k+1}) =
    \begin{cases}
        \increaset(\DeltA^{k},\deltA^{k})
        & \text{if the iteration $k$ is dominating}, \\[1ex]
        (\DeltA^{k},\deltA^{k})
        & \text{if the iteration $k$ is improving}, \\[1ex]
        \decreaset(\DeltA^{k},\deltA^{k})
        & \text{otherwise.} \\
    \end{cases}
\end{equation}
The $\decreaset$ and $\increaset$ rules are defined as in~\cite[Equations~(2) and~(3)]{denorme-ads-2025}. They allow that if $\lim_{k\in \N} \deltA^k=0$, then $\lim_{k\in \N} \dfrac{\deltA^k}{\DeltA^k}=0$ and $\lim_{k\in \N} \DeltA^k=0$.
The set $\V^k_\text{succ}$ is then updated as follows. 
\begin{equation}
\label{updateVk}
    \V^{k+1}_\text{succ} = \begin{cases}
        \V^{k}_\text{succ} \cup \{t_\text{search}\} \ &\text{ if } t_\text{search}\in \mathbb{S}^k_\text{eval}\cap \esh  \text{ is dominating} \\
         \V^{k}_\text{succ} \cup \{t_\text{poll}\} \ &\text{ if } t_\text{poll}\in \mathbb{P}^k_\text{eval} \text{ is dominating,} \\
          \V^{k}_\text{succ} \cup \{t_\text{update}\}  &\text{ if } t_\text{update}\in\mathbb{C}^k \cup \mathbb{S}^k_\text{eval} \cup \mathbb{P}^k_\text{eval} \cap \esh  \text{ is improving,} \\
          \V^k_\text{succ} & \text{ otherwise.} 
    \end{cases}
\end{equation}

Note that the ``otherwise'' cases in~\eqref{def:update:deltas} and~\eqref{updateVk} actually encompass two distinct situations, namely reframing  and unsuccessful iterations. 
Although both cases lead to the same updates of the parameters $(\DeltA^k,\deltA^k)$ and the set $\V^k_{\mathrm{succ}}$, the outcome of the iteration is fundamentally different. In the case of a reframing iteration, an incumbent solution is guaranteed to be modified, reflecting a meaningful improvement. 
In contrast, during an unsuccessful iteration, no incumbent is updated and no progress is made in terms of dominance or improvement. 

In \cref{algo-adspb}, the temporary set $T^k_{\text{succ}}$ is introduced to ensure compact notation.
The set of successfully visited points $\V^k_{\mathrm{succ}}$ replaces the set $\V^k$ introduced in~\cite{denorme-ads-2025} and is used to construct the punctured space $\esh$. At the end of each iteration, the set $\V^k_{\mathrm{succ}}$ may be updated by adding at most one new point. A point is added to $\V^k_{\mathrm{succ}}$ only when iteration $k$ is declared improving or dominating. Moreover, whenever iteration $k$ is improving or dominating, the point added to $\V^k_{\mathrm{succ}}$ is guaranteed to be at least $\deltA^k$ units away from all previously stored points in $\V^k_{\mathrm{succ}}$.

This preserves the foundational principles of directional direct search while removing the limitations imposed by mesh structures in \madspb.
The set of successfully visited points $\V^k_{\mathrm{succ}}$ replaces the set $\V^k$ introduced in~\cite{denorme-ads-2025} and is used to construct the punctured space $\esh$. At the end of each iteration, the set $\V^k_{\mathrm{succ}}$ may be updated by adding up to one new point. A point is added to $\V^k_{\mathrm{succ}}$ only when iteration $k$ is declared improving or dominating. 

There is three main differences between \adspb and \madspb. 
First, the search step of \adspb is completely free in $X$: any trial point in the blackbox domain may be evaluated, without projection onto a mesh. 
Second, before evaluating poll points, \adspb checks whether they belong to the punctured space $\esh$, so as to avoid evaluating points that are too close to previously successful points. 
Third, this mechanism introduces a new possible qualification of the iteration, called a reframing iteration, which occurs when the search step finds a successful point outside the punctured space and then the poll fails to find a dominating point.

\begin{algorithm}[htb!]
\caption{Adaptive Direct Search with Progressive Barrier (\adspb)}
\label{algo-adspb}
\begin{algorithmic}[0]

\Statex\smallskip {\color{blue}\textbf{$\triangleright$ Step~0. Initialization:}}
\Statex
\hspace{0.2em}%
\begin{minipage}{\dimexpr\linewidth-1.5em\relax}
\begin{tcolorbox}[
  enhanced,
  colback=white,
  boxrule=0pt,
  frame hidden,
  borderline west={0.3pt}{0pt}{blue},
  sharp corners,
  left=0em,
  right=0pt,
  top=0pt,
  bottom=0pt,
  boxsep=0pt
]
\begin{algorithmic}
\Statex $x^0 \in X$, $\DeltA^0 \in \R_+^*$, $\deltA^0 \in \R_+^*$ such that $\deltA^0 \le \DeltA^0$ \\
$h^0_{\max} \gets +\infty$, $k \gets 0$ \hfill {\color{blue}$\triangleright$ initial parameters} \\
$\C^0 \gets \{x^0\}$ \hfill {\color{blue}$\triangleright$ initial set of cache points} \\
$\V^0_{\text{succ}} \gets \{x^0\}$ \hfill {\color{blue}$\triangleright$ initial set of successful points} \\
$\eshzero \gets \{x \in \R^n : \|x-x^0\| \ge \deltA^0\}$ \hfill {\color{blue}$\triangleright$ initial punctured space}

\end{algorithmic}
\end{tcolorbox}
\end{minipage}

\Statex\smallskip {\color{blue}\textbf{$\triangleright$ Step~1. Incumbents definition:}}
\Statex
\hspace{0.2em}%
\begin{minipage}{\dimexpr\linewidth-1.5em\relax}
\begin{tcolorbox}[
  enhanced,
  colback=white,
  boxrule=0pt,
  frame hidden,
  borderline west={0.3pt}{0pt}{blue},
  sharp corners,
  left=0em,
  right=0pt,
  top=0pt,
  bottom=0pt,
  boxsep=0pt
]
\begin{algorithmic}

\Statex\smallskip Set the incumbents $x^k_{\text{F}}$ and $x^k_{\text{I}}$
(see \cref{sub:incumbent}) and
$T^k_{\text{succ}} \gets \emptyset$

\end{algorithmic}
\end{tcolorbox}
\end{minipage}

\Statex\smallskip {\color{blue}\textbf{$\triangleright$ Step~2. Function evaluations:}}

\Statex
\hspace{0.2em}%
\begin{minipage}{\dimexpr\linewidth-1.5em\relax}
\begin{tcolorbox}[
  enhanced,
  colback=white,
  boxrule=0pt,
  frame hidden,
  borderline west={0.3pt}{0pt}{blue},
  sharp corners,
  left=0em,
  right=0pt,
  top=0pt,
  bottom=0pt,
  boxsep=0pt
]
\begin{algorithmic}

\Statex\smallskip {\color{blue}\textbf{$\triangleright$ Step~2.1. Search step:}}
\State Define a finite search set $\mathbb{S}^k \subset X$
\If{$t_{\text{search}}$ is \textit{dominating} for some $t_{\text{search}} \in \mathbb{S}^k_{\text{eval}}$}
  \If{$t_{\text{search}} \in \esh$}
    \State Set  $T^k_{\text{succ}} \gets \{t_{\text{search}}\}$ and go to {\color{blue}\textbf{Step~3}}
  \EndIf
\EndIf

\Statex\smallskip {\color{blue}\textbf{$\triangleright$ Step~2.2. Poll step:}}
\State Define the poll set $\mathbb{P}^k=\mathbb{P}^k_\text{F}\cup \mathbb{P}^k_\text{I}$ around $p_{\text{F}}^k$ and $p_{\text{I}}^k$ (see \cref{def:poll:centers})
\If{$t_{\text{poll}}$ is \textit{dominating} for some
$t_{\text{poll}} \in \mathbb{P}^k_{\text{eval}} \cap \esh$}
  \State Set $T^k_{\text{succ}} \gets \{t_{\text{poll}}\}$ 
\EndIf

\end{algorithmic}
\end{tcolorbox}
\end{minipage}

\Statex\smallskip {\color{blue}\textbf{$\triangleright$ Step~3. Update step:}}
\Statex
\hspace{0.2em}%
\begin{minipage}{\dimexpr\linewidth-1.5em\relax}
\begin{tcolorbox}[
  enhanced,
  colback=white,
  boxrule=0pt,
  frame hidden,
  borderline west={0.3pt}{0pt}{blue},
  sharp corners,
  left=0em,
  right=0pt,
  top=0pt,
  bottom=0pt,
  boxsep=0pt
]
\begin{algorithmic}
\State Set $\C^{k+1} \gets \C^k \cup \mathbb{S}^k_{\text{eval}} \cup \mathbb{P}^k_{\text{eval}}$
\Statex

\shortstack[l]{\begin{tabular}{l} Declare \\iteration $k$\end{tabular}}
$\left\{
\begin{array}{ll}
    \textit{dominating} & \mbox{\textbf{if }} T^k_{\text{succ}} \neq \emptyset\\[0.3em]
    \textit{reframing} & \mbox{\textbf{else, if }} \text{the search step produced a dominating point}\\[0.3em]
    \textit{improving} & \mbox{\textbf{else, if }}  \text{there is }t_\text{update}\in\C^{k+1}\cap \esh \text{ an improving point}\\[0.3em]
    \textit{unsuccessful} & \mbox{\textbf{otherwise}}
\end{array}
\right.$
\Statex \textbf{if } iteration $k$ is \textit{improving} \textbf{then} set $T^k_{\text{succ}} \gets \{t_{\text{update}}\}$
\Statex Update $\V^{k+1}_{\text{succ}} \gets \V^k_{\text{succ}} \cup T^k_{\text{succ}}$, and $\eshkplusun$ according to~\eqref{eq:esh}
\Statex Update $h^{k+1}_{\max}$ and $(\DeltA^{k+1}, \deltA^{k+1})$
according to~\eqref{eq:update:hmax} and~\eqref{def:update:deltas}

\end{algorithmic}
\end{tcolorbox}
\end{minipage}

\Statex\smallskip {\color{blue}\textbf{$\triangleright$ Step~4. Termination:}}
\Statex
\hspace{0.2em}%
\begin{minipage}{\dimexpr\linewidth-1.5em\relax}
\begin{tcolorbox}[
  enhanced,
  colback=white,
  boxrule=0pt,
  frame hidden,
  borderline west={0.3pt}{0pt}{blue},
  sharp corners,
  left=0em,
  right=0pt,
  top=0pt,
  bottom=0pt,
  boxsep=0pt
]
\begin{algorithmic}
\If{no termination test is triggered}
   $k \gets k+1$ and go to {\color{blue}\textbf{Step~1}}
\EndIf
\end{algorithmic}
\end{tcolorbox}
\end{minipage}
\end{algorithmic}
\end{algorithm}


\subsection{Convergence analysis}
\label{sec:convergence}

This section presents the convergence analysis of \adspb. 
As is common in direct search~\cite{AuDe03a},
 the analysis first studies the behavior of $\delta^k$ and $\Delta^k$ as $k$ goes to infinity,
and then proposes stationary results.

\begin{assumption} \label{asm:1}
All the points at which the objective and constraint functions are evaluated belong to a compact set $L$.
\end{assumption}

Under \cref{asm:1}, the exclusion radius parameter $\deltA^k$ and the frame size parameter $\DeltA^k$ converge to zero as the number of iterations approaches infinity.

\begin{theorem}
    \label{Th-limdelta}
Let \cref{asm:1} hold. 
The exclusion radius parameter sequence $\{\deltA^k\}_{k \in \N}$ and the frame size sequence $\{\DeltA^k\}_{k \in \N}$ produced by an instance of \adspb satisfy 
$$\displaystyle\lim_{k \rightarrow \infty} \deltA^k \ = \ 0 \qquad \text{ and } \qquad \displaystyle\lim_{k \rightarrow \infty} \DeltA^k \ =\  0.$$
\end{theorem}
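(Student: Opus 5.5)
We prove $\lim_{k}\deltA^k=0$ first. The properties of the $\increaset$ and $\decreaset$ rules recalled after~\eqref{def:update:deltas} then give $\lim_k \DeltA^k=0$ directly. The argument follows the packing/counting strategy used for \ads in~\cite{denorme-ads-2025}, adapted to the set $\V^k_\text{succ}$.

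\textbf{Step 1: $\liminf_k \deltA^k = 0$.} Suppose not. Then there is $\underline\deltA>0$ with $\deltA^k\ge\underline\deltA$ for all $k$.

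Every dominating or improving iteration adds to $\V^k_\text{succ}$ a point $t$ lying in $\esh$. By~\eqref{eq:esh}, such a point satisfies $\|t-y\|\ge\deltA^k\ge\underline\deltA$ for every $y\in\V^k_\text{succ}$.

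All points of $\V^k_\text{succ}$ are evaluated points, so by \cref{asm:1} they lie in the compact set $L$. Balls of radius $\underline\deltA/2$ centred at these points are pairwise disjoint and contained in a bounded enlargement of $L$. A volume argument therefore bounds $|\V^k_\text{succ}|$ uniformly in $k$. Hence only finitely many iterations are dominating or improving.

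Every later iteration is reframing or unsuccessful, and both apply $\decreaset$. The $\decreaset$ rule strictly shrinks $\deltA^k$ by a factor bounded away from $1$, so $\deltA^k\to0$. This contradicts $\deltA^k\ge\underline\deltA$.

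\textbf{Step 2: upgrade to a full limit.} This is the main obstacle. Step 1 yields only a subsequence, while dominating iterations may increase $\deltA^k$ and improving iterations freeze it. We argue by contradiction again: suppose $\limsup_k\deltA^k>\varepsilon>0$.

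Because $\increaset$ multiplies $\deltA^k$ by at most a fixed factor $\tau$, the sequence must cross upward from below $\varepsilon/\tau$ to above $\varepsilon$ infinitely often. Each crossing requires a dominating iteration carried out at a level $\deltA^k\ge\varepsilon/\tau^m$, where $m$ is a fixed number of increase steps. I will first check whether the rules of~\cite[Eqs.~(2)--(3)]{denorme-ads-2025} are scale-based with bounded ratios, so that such an $m$ exists.

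Each of these infinitely many dominating iterations inserts into $\V^k_\text{succ}$ a point separated by at least $\varepsilon/\tau^m$ from all previously stored points. Points are never removed from $\V^k_\text{succ}$, so these insertions form an infinite $(\varepsilon/\tau^m)$-separated subset of the compact set $L$. This is impossible, so $\limsup_k\deltA^k=0$.

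The delicate points are the following. Points added during improving iterations also respect the separation, which is harmless. Reframing iterations add nothing to $\V^k_\text{succ}$ but decrease $\deltA^k$, so they cannot cause increases. The search may evaluate points outside $\esh$, but these are never added to $\V^k_\text{succ}$, so the packing count is unaffected.

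Once $\deltA^k\to0$, the stated property of the update rules gives $\deltA^k/\DeltA^k\to0$ and $\DeltA^k\to0$, which completes the proof.
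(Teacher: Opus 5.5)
Your argument is correct, but it is organized differently from the paper's. The paper fixes $\varepsilon>0$ and assumes $\mathcal{K}_\varepsilon=\{k:\deltA^k\ge\varepsilon\}$ is infinite. It then splits on whether the set $\mathcal{S}_\varepsilon$ of dominating or improving iterations in $\mathcal{K}_\varepsilon$ is finite. The infinite case is your packing argument, phrased with Bolzano--Weierstrass instead of a volume count. The finite case is closed by asserting that all iterations are eventually unsuccessful or reframing.

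You instead first prove $\liminf_k\deltA^k=0$ and then upgrade to a full limit with an upcrossing argument. This makes your route more rigorous at exactly the point the paper's first case passes over. Finiteness of $\mathcal{S}_\varepsilon$ only stops successes performed \emph{at levels} $\deltA^k\ge\varepsilon$. It does not by itself exclude dominating iterations at levels below $\varepsilon$ that push $\deltA^k$ back above $\varepsilon$ infinitely often. You rule this out by noting that the last step of each upcrossing must be a dominating iteration, since improving iterations freeze $\deltA^k$ and all others decrease it. That iteration is carried out at a level of at least $\varepsilon$ divided by the maximal growth factor of $\deltA$, so the packing argument applies at that smaller level.

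The price is that you rely on two properties of the rules of~\cite[Eqs.~(2)--(3)]{denorme-ads-2025} that the paper does not recall explicitly. First, $\decreaset$ contracts $\deltA^k$ by a factor bounded away from $1$. Second, $\increaset$ enlarges $\deltA^k$ by at most a bounded factor. State both explicitly as the hypotheses being used rather than leaving them as something to check later. They hold for the usual scale-based rules. The paper's first case implicitly needs the former, and a complete version of that case needs the latter too.
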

\begin{proof}
Let $\varepsilon \in\R_+^*$ and define $\mathcal{K}_\varepsilon:=\left\{k\in\N:\deltA^k\geq \varepsilon\right\}$,
a subset of iteration indices. 

Suppose that $\mathcal{K}_\varepsilon$ contains infinitely many elements.
Let $\mathcal{S}_{\varepsilon}\subseteq \mathcal{K}_\varepsilon$ denote the subset of indices of dominating and improving iterations $k$ for which $\deltA^k\geq \varepsilon$.
Two scenarios can occur depending on the cardinality of $\mathcal{S}_{\varepsilon}$: 
\begin{itemize}
    \item
$|\mathcal{S}_{\varepsilon}|$ is finite, in which case there exists $k_0 \in \N$ such that the iteration $k$ is unsuccessful or reframing for all $k\geq k_0$.
It follows that $ \lim_{k\in\mathcal{K}_\varepsilon }\deltA^k=0$,
 contradicting $\deltA^k\geq \varepsilon$, for all $k\in\mathcal{K}_\varepsilon$.
    \item  
$|\mathcal{S}_\varepsilon|$ is infinite.
For any $k \in \mathcal{S}_\varepsilon$, 
    let $t^k \in \{ t_{\text{search}}, t_{\text{poll}}, t_{\text{update}}\}$ denote the improving or dominating point found during the iteration.  
    Since the iteration is either dominating or improving,
    it follows that $t^k$ belongs to $\esh$ and 
$$ t^k \ \in \ \left\{ t \in X \,:\, \Vert t-t^j\Vert \geq \delta^k,~j \in \mathcal{S}_\varepsilon \text{ and } j< k\right\}.$$
Hence, any pair of indices $k_1 \ne k_2$ in $\mathcal{S}_\varepsilon$
 satisfy the inequality
\begin{equation}
    \label{thlimeq}
    \|t^{k_1} -t^{k_2} \| \ \ge \ \inf_{i\in \mathcal{S}_\varepsilon} \deltA^i  \ \ge \ \varepsilon\,.
\end{equation} 
However, since $\{t^k\}_{k \in \mathcal{S}_\varepsilon}$ belongs to the compact set $\compact$ (from \cref{asm:1}), the Bolzano-Weierstrass theorem ensures the existence of a convergent subsequence, contradicting~\eqref{thlimeq}. 
\end{itemize}
Both scenarios lead to a contradiction, and therefore the set $\mathcal{K}_\varepsilon$ is finite.  
Finally, the discussion following~\eqref{def:update:deltas}
ensures that $\{\deltA^k\}_{k\in\N} \to 0$ implies that $\{\DeltA^k\}_{k\in\N} \to 0$.
\end{proof}


\Cref{Th-limdelta} is stronger than the corresponding result in the \madspb context~\cite[Proposition~3.1]{AuDe2006}, as the limit is shown to be zero, instead of simply ensuring that the limit inferior is zero.
This stronger limit result is similar to those derived in sufficient decrease contexts~\cite[Theorem~4.1]{diouane2022trego} and~\cite[Lemma~3.2]{BeSoVi2023}.

The \adspb algorithm generates a sequence of trial points from which one can extract one or two subsequences of poll centers: the feasible poll centers $\{p^k_\text{F}\}_{k\in \N}$ and the infeasible poll centers $\{p^k_\text{I}\}_{k\in \N}$. 
The following convergence result focuses on the subsequence of feasible poll centers.

\begin{definition}[A refining subsequence]
\label{def:refining}
\cite[Definition~3.1]{AuDe2006} A convergent subsequence $\{p^k\}_{k \in \K}$,
where $\K$ is an infinite subset of $\N$, of unsuccessful iterations is said to be a refining subsequence if $\lim_{k \in \K} \Delta^k = 0 $. 
The limit $\hat x$ of a convergent refining subsequence is called a refined point. 
If $ \hat v = \lim_{k \in \L} v^k $ exists for some $\L \subseteq \K$ with poll direction $v^k \in \mathbb{P}^k$ and if $p^k+\DeltA^k v^k \in X$ for infinitely many $k \in \L$, then $\hat v$  is said to be a refining
direction for $\hat x$.
\end{definition}

The notion of a refining subsequence makes it possible to characterize the asymptotic behavior of the sequence of feasible incumbents along unsuccessful iterations as the frame size parameter tends to zero.
For the next convergence results, one must guaranty that
$p+t \hat v$ belongs to $\Omega$ when $p \in \Omega$ is close to $\hat x$, which is ensured if $\hat v$ is a hypertangent vector to $\Omega$ at $\hat x$. 

\begin{definition}{\cite{Rock80a}}
A vector $\hat v\in \R^n$ is said to be a hypertangent vector to the set $\Omega \subseteq \R^n$
at the point $\hat{x}\in\Omega$ if there exists a scalar $\epsilon>0$ such that
\begin{equation*}
p+tv\in\Omega \qquad\forall p\in\Omega\cap B_\epsilon(\hat{x}),\  v\in B_\epsilon(\hat v), \ \text{ and }\  0<t<\epsilon.
\end{equation*}
\end{definition}

The hypertangent cone to $\Omega$ at $\hat{x}$, denoted by $T_\Omega^H(\hat{x})$, is the set of all hypertangent vectors
to $\Omega$ at $\hat{x}$. In this case, for $\hat{x}\in \Omega$ and $\v \in \T(\hat x)$, if $f$ is Lipschitz continuous near $\hat x$, the Clarke Jahn generalized derivative, as defined in~\cite{Clar83a} and extended to constrained settings in~\cite{Jahn94a}, is given by
\begin{equation*}
  f^\circ(\hat x; \v) := \limsup_{\substack{p \to \hat x, ~ t \searrow 0, \\ p + t\v \in \Omega, ~ p\in\Omega}} \frac{f(p + t\v) - f(p)}{t}.
\end{equation*}
The next theorem establishes that any feasible refined point is Clarke stationary with respect to every refining direction in $\T(\hat x_\text{F})$.

\begin{theorem}
\label{th:fclark}
    Let \cref{asm:1} hold. 
    If $\{p^k_\textnormal{F}\}_{k\in \K}$ is a refining subsequence converging to $\hat x_\textnormal{F} \in \Omega$, near which $f$ is Lipschitz continuous, then for all refining directions $\hat v\in \T(\hat{x}_\textnormal{F})$,  
    \begin{equation*}
            f^\circ(\hat x_\textnormal{F}; \hat v)\geq 0.
    \end{equation*}
\end{theorem}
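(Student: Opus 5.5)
The plan is the standard Clarke-derivative argument of \madspb, adapted to the punctured space. By \cref{def:refining}, each $k\in\K$ is an unsuccessful iteration with $\Delta^k\to 0$. Let $\hat v\in\T(\hat x_\text{F})$ be a refining direction, so there is $\L\subseteq\K$ with poll directions $v^k\in\mathbb{D}^k_\text{F}$, $v^k\to\hat v$, and $p^k_\text{F}+\Delta^k v^k\in X$ for infinitely many $k\in\L$. Since $p^k_\text{F}\in\Omega$ converges to $\hat x_\text{F}$, $\Delta^k\to0$ and $v^k\to\hat v$, the hypertangency of $\hat v$ gives $p^k_\text{F}+\Delta^k v^k\in\Omega$ for all sufficiently large $k\in\L$.

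The first key step is to show that the tentative poll point $p^k_\text{F}+\Delta^k v^k$ is actually evaluated, i.e., lies in $\esh$, for infinitely many $k\in\L$. It is at distance $\Delta^k$ from $p^k_\text{F}$. Any $y\in\V^k_\text{succ}$ within $\deltA^k$ of it would lie within $\Delta^k+\deltA^k$ of $p^k_\text{F}$. By \cref{Th-limdelta} and the $\increaset/\decreaset$ rules, $\deltA^k/\Delta^k\to0$. Here I expect the main obstacle. One must exclude a successful point lying in the thin shell $\Delta^k-\deltA^k\le\|y-p^k_\text{F}\|\le\Delta^k+\deltA^k$ exactly in the direction $v^k$. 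Alternatively, one shows that such a point would be a cached feasible point with value at least $f^k_\text{F}$, which gives the same inequality anyway.

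I would handle this as follows. If the poll point $t^k$ falls outside $\esh$, there is $y^k\in\V^k_\text{succ}\subseteq\C^k$ with $\|y^k-t^k\|<\deltA^k$. Since $f$ is Lipschitz near $\hat x_\text{F}$ with constant $\lambda$ and $\deltA^k=o(\Delta^k)$, the bound $f(t^k)\ge f(y^k)-\lambda\deltA^k$ serves as the surrogate. If $y^k\in\Omega$, then $f(y^k)\ge f^k_\text{F}$ because $p^k_\text{F}=x^k_\text{F}$ minimizes $f$ over feasible cache points. Here $p^k_\text{F}=x^k_\text{F}$ holds because an unsuccessful iteration has no dominating search point. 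If $y^k\notin\Omega$, I would rely on hypertangency: a small enough ball around the hypertangent poll point lies in $\Omega$ near $\hat x_\text{F}$. Indeed, hypertangency applied with $v\in B_\epsilon(\hat v)$ gives a ball of radius of order $\Delta^k$ around $t^k$ inside $\Omega$, which contains $y^k$ since $\deltA^k\ll\Delta^k$. So $y^k\in\Omega$ automatically, and this case cannot occur.

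In either case, for large $k\in\L$ the evaluated (or nearby cached) point satisfies $f(p^k_\text{F}+\Delta^k v^k)\ge f(p^k_\text{F})-\lambda\deltA^k$, since an unsuccessful iteration produced no feasible point with value below $f^k_\text{F}$. Then
\begin{equation*}
f^\circ(\hat x_\text{F};\hat v)\ \ge\ \limsup_{k\in\L}\frac{f(p^k_\text{F}+\Delta^k\hat v)-f(p^k_\text{F})}{\Delta^k}\ \ge\ \limsup_{k\in\L}\left(\frac{f(p^k_\text{F}+\Delta^k v^k)-f(p^k_\text{F})}{\Delta^k}-\lambda\|v^k-\hat v\|\right)-\lim_{k\in\L}\lambda\frac{\deltA^k}{\Delta^k}\ \ge\ 0.
\end{equation*}
The replacement of $v^k$ by $\hat v$ is justified by Lipschitz continuity. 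All the points involved lie in $\Omega$ by hypertangency, so they are admissible in the Clarke–Jahn limsup. The final inequality uses $v^k\to\hat v$ and $\deltA^k/\Delta^k\to0$.
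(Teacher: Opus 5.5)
Your proof is correct and is essentially the paper's argument. Both split on whether $t^k=p^k_\text{F}+\Delta^k v^k$ lies in $\esh$. Both take a nearby point $y^k\in\V^k_\text{succ}\subseteq\C^k$, show it is feasible by hypertangency (using $\deltA^k/\Delta^k\to0$), and conclude $f(y^k)\ge f^k_\text{F}=f(p^k_\text{F})$. So your tentative ``first key step'' that the poll point is actually evaluated is indeed not needed.

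The only difference is technical. You return from $y^k$ to $t^k$, and from $v^k$ to $\hat v$, through explicit Lipschitz estimates with errors $\lambda\deltA^k$ and $\lambda\Delta^k\|v^k-\hat v\|$, both $o(\Delta^k)$. The paper instead writes $y^k=p^k_\text{F}+\Delta^k\bigl(v^k+\tfrac{\deltA^k}{\Delta^k}w^k\bigr)$ with $\|w^k\|\le1$, and invokes \cite[Proposition~3.9]{AuDe2006} to evaluate the Clarke--Jahn derivative along these perturbed directions directly.
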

\begin{proof}
Let $\{p^k_\text{F}\}_{k\in \K}$, be a refining subsequence composed of feasible poll centers,
 with refined point $\hat x_\text{F} \in \Omega$, and
 suppose that $\hat v = \lim_{k \in \L} v^k $ is a refining direction. 

The definition of a refining sequence ensures that 
    $ \lim_{k \in \L}\Delta^k = 0$ 
    and  $p^{k}_\text{F} = \hat x_\text{F}$ for all $k \in \L$.

Now, consider $t^k_\text{F} = p^k_\text{F}+\DeltA^k v^k$, 
    the poll point at iteration $k \in \L$ associated to the direction $v^k$.
Two cases need to be analyzed.
First, if $t^k_\text{F}$ does not belongs to the punctured space $\esh$, 
 then the trial point $t^k_\text{F}$ is rejected as there exists a previously visited point $y^k \in \C^k \subseteq \C^{k+1}$, such that $ \|p^k_\text{F}+ \DeltA^kv^k-y^k\|<\deltA^k.$
The direction  
    $w_k = \frac1{\delta^k}(y^k-t^k)$
    satisfies $\|w^k\| \leq 1$
    and
\begin{equation}
    y^k \ = \ t^k_\text{F} + \delta^k w^k 
        \ = \ p^k_\text{F} + \Delta^k v^k + \delta^k w^k 
        \ = \ p^k_\text{F} + \Delta^k\left( v^k + \tfrac{\delta^k}{\Delta^k} w^k\right) 
        \ \in \ \C^{k+1}\,.
    \label{eq-y-esh}
\end{equation}
Second, if $t^k_\text{F}$ belongs to the punctured space $\esh$, then define $y^k = t^k_\text{F}$ and set $w^k=0$ so that~\eqref{eq-y-esh} remains valid.
In both cases, $y^k \in \C^{k+1}$ and $f(y^k) \geq f(p^k_\text{F})$.

Now, \cref{Th-limdelta} ensures that 
    $\lim_{k \in \L}\deltA^k = \lim_{k \in \L}\DeltA^k = 0$, and so $ \lim_{k\in \L} \dfrac{\deltA^k}{\DeltA^k}=0$
which implies that
\begin{equation*}
    \displaystyle \lim_{k\in \L} v^k+\dfrac{\deltA^k}{\DeltA^k}w^k \ =\ \hat v\, .
\end{equation*}
Furthermore, since $\hat v$ is an hypertangent direction, there exists an infinite subset $\W\subseteq \L$, such that for all $k\in \W$, 
$$y^k, \ p^k_\text{F} \ \in\  \Omega \quad \mbox{ and } \quad f(y^k)\ \geq \  f(p^k_\text{F}).$$
Since $f$ is Lipschitz continuous near $\hat x_\text{F}$,
 the Clarke derivative exists.
To derive a lower bound, consider the following three sequences indexed by $k \in \W$
    $$ p^k_\text{F} \to \hat x_\text{F}, \quad 
    \quad \DeltA^k \searrow 0 \quad 
    \mbox{ and } \quad 
    v^k +\dfrac{\deltA^k}{\DeltA^k}w^k \to \hat v$$
    and by~\cite[Proposition~3.9]{AuDe2006}, it follows that

\begin{align*}
  f^\circ( \hat x_\text{F}; \v) 
        \ = \limsup_{\substack{x \to \hat x_\text{F}, ~ t \searrow 0, \\ x + t\v \in \Omega, ~x\in\Omega}} \frac{f(x + t\v) - f(x)}{t} 
        &= \limsup_{\substack{x \to \hat x_\text{F}, ~ t \searrow 0, \\ x + tv \in \Omega, ~x\in\Omega \\  v \to \v}} \dfrac{f(x+tv)-f(x)}{t}\\
        &\geq \ \limsup_{k \in \W} \frac{f\left(p^k_\text{F} + \DeltA^k  \left(v^k +\dfrac{\deltA^k}{\DeltA^k}w^k\right) \right) - f(p^k_\text{F})}{\DeltA^k}\\
        & =  \  \limsup_{k \in \W} \frac{f(y^k ) - f(p^k_\text{F})}{\DeltA^k} \ \geq \ 0.
\end{align*}
\end{proof}


The next result focuses on the sequence of infeasible poll centers $\{p^k_\text{I}\}_{k\in \N}$. 
There are three possible outcomes for this sequence: the first is that no infeasible point is ever found, in which case the sequence is not defined; 
the second is that there exists a refining infeasible subsequence converging to a refined point $\hat x_\text{I}$ in the feasible region $\Omega$; in this case, \adspb found a global minimizer of $h$; the last case is that there exists a refining infeasible subsequence converging to a refined point outside the feasible region $\Omega$; in this case, a necessary condition for $\hat x_\text{I}$ to be a local minimizer of $h$ is given in the following theorem.

\begin{theorem}
    Let \cref{asm:1} hold. If $\{p^k_\text{I}\}_{k\in \K}$ is a refining subsequence of infeasible poll centers converging to $\hat x_\text{I} \in X$ near which $h$ is Lipschitz with $x^k_\text{I} \in I^k$, then for all refining directions $\hat v\in T^\text{H}_{X}(\hat{x}_\text{I})$ 
    \begin{equation*}
            h^\circ(\hat x_\text{I}; \hat v)\geq 0.
    \end{equation*}
\end{theorem}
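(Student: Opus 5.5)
The plan is to transpose the proof of \cref{th:fclark} from $f$ on $\Omega$ to $h$ on $X$, with the infeasible poll centers in place of the feasible ones. Let $\{p^k_\text{I}\}_{k\in\K}$ be the refining subsequence and $\hat v=\lim_{k\in\L}v^k$ a refining direction, with $\L\subseteq\K$, $v^k\in\mathbb{D}^k_\text{I}$ and $\hat v\in T^\text{H}_X(\hat x_\text{I})$. Since the iterations in $\K$ are unsuccessful, no incumbent changes during them. In particular there is no reframing, so $p^k_\text{I}=x^k_\text{I}\in I^k$.

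The first step is to extract from unsuccessfulness the monotonicity fact that replaces $f(y^k)\ge f(p^k_\text{F})$. An unsuccessful iteration is neither dominating nor improving. Hence every evaluated infeasible point $y$ in $\C^{k+1}\cap\esh$ satisfies $h(y)\ge h^k_\text{I}=h(p^k_\text{I})$, since otherwise $y$ would be improving (or dominating). A feasible $y$ has $h(y)=0<h(p^k_\text{I})$, but any feasible evaluated poll point in $\esh$ would dominate (every feasible point dominates if $F^k=\emptyset$). So feasible poll points must be handled separately. Fortunately, the theorem only claims a nonnegative Clarke derivative of $h$, and $h\ge 0$ everywhere. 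I will also argue that if $\hat x_\text{I}\in\Omega$ then $\hat x_\text{I}$ globally minimizes $h$, so the inequality holds trivially. I will therefore restrict to $\hat x_\text{I}\notin\Omega$, which by continuity of $h$ gives $h>0$ near $\hat x_\text{I}$.

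Next, I build $y^k$ exactly as in \eqref{eq-y-esh}. Let $t^k_\text{I}=p^k_\text{I}+\DeltA^k v^k$. If $t^k_\text{I}\in\esh$, I set $y^k=t^k_\text{I}$ and $w^k=0$. Otherwise there is $y^k\in\V^k_\text{succ}\subseteq\C^{k+1}$ with $\|t^k_\text{I}-y^k\|<\deltA^k$, and I set $w^k=(y^k-t^k_\text{I})/\deltA^k$. In both cases $y^k=p^k_\text{I}+\DeltA^k(v^k+\tfrac{\deltA^k}{\DeltA^k}w^k)$ with $\|w^k\|\le1$.

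The delicate point, and the main obstacle, is to show $h(y^k)\ge h(p^k_\text{I})$ in the rejected case, where $y^k$ is a previously successful point rather than a point in $\esh$. I plan to use that $p^k_\text{I}$ is the $f$-minimizer of $U^k$ below $h^k_{\max}$, combined with the fact that unsuccessful iterations reset $h^{k+1}_{\max}=h^k_\text{I}$. My first attempt is this: if $h(y^k)<h(p^k_\text{I})$ with $y^k\in\C^k$ infeasible, then $y^k$ (or a point dominating it in $U^k$) would be an improving point. I would need to check whether that point lies in $\esh$. If it does not, I would fall back on the observation in \cref{Th-limdelta} that such points accumulate only finitely often at fixed scale, or on the same device the authors used for $f$ in \cref{th:fclark}, where the analogous step was asserted directly.

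Granting $h(y^k)\ge h(p^k_\text{I})$ for all $k\in\L$, I apply \cref{Th-limdelta}. It gives $\deltA^k/\DeltA^k\to0$, hence $v^k+\tfrac{\deltA^k}{\DeltA^k}w^k\to\hat v$. Since $\hat v$ is hypertangent to $X$, both $y^k$ and $p^k_\text{I}$ lie in $X$ for $k$ in an infinite $\W\subseteq\L$. Then \cite[Proposition~3.9]{AuDe2006}, applied with $h$ Lipschitz near $\hat x_\text{I}$, yields
\[
h^\circ(\hat x_\text{I};\hat v)\ \ge\ \limsup_{k\in\W}\frac{h(y^k)-h(p^k_\text{I})}{\DeltA^k}\ \ge\ 0 .
\]
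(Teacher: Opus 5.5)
Your route is the paper's route step for step: the trivial case $\hat x_\text{I}\in\Omega$, continuity of $h$ to make everything near $\hat x_\text{I}$ infeasible, the points $y^k$ of \eqref{eq-y-esh}, and the limsup argument of \cref{th:fclark}. The problem is the step you explicitly grant, namely $h(y^k)\ge h(p^k_\text{I})$ when $t^k_\text{I}=p^k_\text{I}+\DeltA^k v^k\notin\esh$. This is a genuine gap, and none of your suggested repairs closes it.

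When $t^k_\text{I}\in\esh$, the inequality does follow, as you say. In that case $t^k_\text{I}$ is evaluated, lies in $\C^{k+1}\cap\esh$, and is infeasible for large $k$, so $0<h(t^k_\text{I})<h^k_\text{I}$ would make the iteration improving. In the rejected case, however, $y^k\in\V^k_\text{succ}$. Every point of $\V^k_\text{succ}$ is automatically outside $\esh$, since it is at distance $0<\deltA^k$ from itself. So unsuccessfulness of iteration $k$ says nothing about $h(y^k)$. The same is true of any point of $U^k$ dominating $y^k$, because nothing places it in $\esh$. Your appeal to \cref{Th-limdelta} does not help either. That argument needs a fixed lower bound $\varepsilon$ on $\deltA^k$, whereas here $\deltA^k\to0$, so infinitely many rejections by distinct successful points accumulating at $\hat x_\text{I}$ are not ruled out.

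Nor can the step be copied from \cref{th:fclark}. There it is actually justified: once $y^k\in\Omega$, one has $f(y^k)\ge f^k_\text{F}$ because $x^k_\text{F}$ minimizes $f$ over \emph{all} feasible points of $\C^k$. The transposed statement would need $x^k_\text{I}$ to minimize $h$ over the infeasible points of $\C^k$. By \eqref{eq:Ik}, however, it is an $f$-minimizer among nondominated points with $h\le h^k_{\max}$. Successful cached points with $0<h(y)<h^k_\text{I}$ (necessarily with $f(y)>f^k_\text{I}$) are compatible with the rules. For instance, take an earlier improving iteration $j$: its $t_\text{update}$ enters $\V^{j+1}_\text{succ}$. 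The lowered threshold \eqref{eq:update:hmax} may still admit a nondominated cached point $z$ with $h(t_\text{update})<h(z)\le h^{j+1}_{\max}$ and $f(z)<f(t_\text{update})$, and $z$ can then be the infeasible incumbent. A later poll point of this incumbent falling within $\deltA^k$ of $t_\text{update}$ is rejected with $h(y^k)<h(p^k_\text{I})$.

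The paper's own proof passes over exactly this point. Its ``This ensures'' sentence only derives the infeasibility of $y^k$ and $p^k_\text{I}$ from continuity, and then asserts the inequality. So your proposal is no less complete than the published argument. To turn either into a proof, you need an additional ingredient. One option is an argument that along $\L$ such rejections eventually stop. Another is to restrict the statement to refining directions for which $t^k_\text{I}\in\esh$ for infinitely many $k\in\L$; then $w^k=0$ and your accepted-case argument suffices.
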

\begin{proof}
   Indeed, if $\hat x_\text{I} \in \Omega$, then $h(\hat x_\text{I})=0$. Since $h(x)\ge 0$ for all $x\in X$, it follows that $\hat x_\text{I}$ is a global minimizer of $h$ on $X$, and the result holds. Otherwise, if $\hat x_\text{I} \in X\setminus \Omega$, then by continuity of $h$, there exists $\varepsilon>0$ such that $h(x)>0$ for all $x\in B_\varepsilon(\hat x_\text{I})$. This ensures the existence of an infinite subset $\L\subseteq \K$ such that, for all $k\in \L$, the sequence $y^k$ defined in~\eqref{eq-y-esh} satisfies $y^k, p^k_\text{I} \in X\setminus \Omega$ and $h(y^k)\ge h(p^k_\text{I})$.

The conclusion follows by applying the same reasoning as in the proof of \cref{th:fclark}, replacing $f$ by $h$, $F^k$ by $I^k$, and $p^k_{\textnormal{F}}$ by $p^k_{\textnormal{I}}$.
\end{proof}

\section{Computational results}
\label{sec:numerical}

This section evaluates the practical performance of \adspb on a collection of constrained optimization problems. 
The proposed framework is tested on both analytical test problems and constrained blackbox problems with quantifiable and relaxable constraints. 
The objective is to empirically assess the behavior of \adspb and to compare it with existing DFO methods.

The experiments are conducted using the \nomad software~\cite{nomad4paper}, which provides a rich algorithmic environment for DFO. 
In particular, the computational tests reported in this section benefit from several search strategies available in the solver, including a quadratic model-based search~\cite{CoLed2011}, a Nelder-Mead search~\cite{AuTr2018}, and a speculative search strategy~\cite{AuDe2006}. 
The proposed approach is evaluated against other algorithms implemented within \nomad and against
other DFO solvers.
These include the Covariance Matrix Adaptation Evolution Strategy (CMA-ES)~\cite{cmaes}, the Non-dominated Sorting Genetic Algorithm II (NSGA-II)~\cite{Deb2000} and the mixed penalty-interior point method and direct search (LOGDS)~\cite{BrCuLiSi2024}.

The computational tests consider two classes of problems. The first class consists of analytical benchmark problems drawn from the literature, for which the objective and constraint functions are explicitly defined. The second class consists of constrained blackbox optimization problems, where function evaluations may be expensive and no analytical expressions are available. On both classes, \adspb is compared with the aforementioned methods in order to assess its performance in terms of robustness and efficiency. Comparison are done using data profiles~\cite{G-2025-36,MoWi2009}.
The results on analytical problems are presented in the next subsections.

\subsection[Implementation in NOMAD]{Implementation in \nomad}

The \adspb framework is implemented within the \nomad software as a specialized variant of the PB version of \madspb.
This implementation was carried out in a development version of \nomad and reuses part of the existing PB infrastructure while modifying several core components of the iteration logic.

More precisely, \adspb differs from the standard implementation of \madspb in the following aspects. First, \adspb does not rely on any mesh structure, and therefore no mesh projection is performed before evaluating $f$ on trial points. Second, the set of successful points, denoted by $\V^k_{\text{succ}}$, is explicitly maintained throughout the optimization process. 
This set is used to define the punctured space $\esh$.
Third, unlike in the standard \madspb implementation, the poll centers in \adspb are computed only after the search step has been completed. 
This design is consistent with the logic of the algorithm, where the outcome of the search step may affect the incumbent solutions and therefore the points around which polling should be performed.
Finally, poll points that do not belong to $\esh$ are excluded from evaluation. 
This ensures that the poll step only generates points satisfying the exclusion criterion, in accordance with the theoretical framework introduced in this work.

\subsection{Results on analytical problems from literature}

The set of test problems considered in this section is taken from~\cite{AuBrDiLeDSiTr26} as summarized in \cref{tab:problems}.
It consists of fourteen constrained problems. The number of variables $n$ ranges from $2$ to $13$, while the number of inequality constraints $m$ varies from $1$ to $15$. 
For each problem, several initial points are considered, including both feasible and infeasible ones. A problem instance is defined by a given initial point together with a random seed. Each instance is solved multiple times using $10$ different seeds in order to account for the stochastic components of the algorithms. This results in a total of $780$ problem instances solved for each algorithm.
All algorithms tested in this section are implemented within the \nomad software. They share common search components, including a quadratic model-based search, a speculative search, and a Nelder-Mead search. 

\begin{table}[htb!]
\centering
\begin{tabular}{|l|c|c|c||l|c|c|c|}
\hline
Problem & $n$ & $m$ & $\#\,x_0$ & 
Problem & $n$ & $m$ & $\#\,x_0$ \\
\hline \hline
CHENWANG\_F2 &  8 &  6 & 11 & MEZMONTES    &  2 &  2 &  1 \\
CHENWANG\_F3 & 10 &  8 & 10 & OPTENG\_RBF  &  3 &  4 &  1 \\
CRESCENT     & 10 &  2 &  1 & PENTAGON     &  6 & 15 &  1 \\
DISK         & 10 &  1 &  1 & SNAKE        &  2 &  2 &  1 \\
G210         & 10 &  2 &  1 & SPRING       &  3 &  4 & 10 \\
MAD6         &  5 &  7 & 10 & TAOWANG\_F2  &  7 &  4 & 10 \\
MDO          & 10 & 10 & 10 & ZHAOWANG\_F5 & 13 &  9 & 10 \\
\hline
\end{tabular}
\caption{A set of analytical constrained blackbox optimization problems~\cite[Table~1]{AuBrDiLeDSiTr26}.}
\label{tab:problems}
\end{table}

Two variants of \adspb are considered, differing by the choice of poll type. These variants are compared with the corresponding versions of \madspb using the same poll configurations. 
The \texttt{$2n$} variant generates a positive spanning set composed of $2n$ orthogonal directions in $\R^n$, as described in~\cite{AbAuDeLe09}. In contrast, the \texttt{$n+1$}~\cite{AuIaLeDTr2014} variant constructs a set of $n$ orthogonal directions and completes it with an additional direction obtained from a quadratic model, yielding a positive spanning set of size $n+1$. 
In particular, the \madspb $n+1$ variant corresponds to the current default implementation in \nomad. 
For \adspb, the poll directions are used directly to generate trial points, without any projection onto a mesh.

\Cref{fig:analytical} presents the data profiles obtained on the set of analytical problems from the literature, for three target accuracies $\tau \in \{ 10^{-3}, 10^{-5}, 10^{-7}\}$. As the tolerance $\tau$ decreases, the convergence criterion becomes more strict, requiring higher solution accuracy.
Overall, the \adspb variants consistently outperform their \madspb counterparts across all tolerances. For each poll strategy, the \adspb curves dominate those of \madspb.
A similar trend is observed when comparing poll strategies. The \texttt{$n+1$} variants systematically outperform the \texttt{$2n$} variants for both \adspb and \madspb. This improvement can be attributed to the use of a quadratic model to generate the additional polling direction, which enhances the quality of the directions. Moreover, the \texttt{$n+1$} strategy requires at most $n+1$ polling directions, compared to $2n$ for the \texttt{$2n$} variant, leading to a reduced number of function evaluations per iteration. This more economical use of evaluations further contributes to its better performance.
The effect is particularly pronounced at the restrictive tolerance $\tau = 10^{-7}$, where the gap between the two \adspb variants becomes more noticeable than for \madspb. 
One possible explanation is that \adspb benefits more from the model-based direction, since this direction is evaluated directly rather than projected onto a mesh, allowing the algorithm to make better use of high-quality trial points.
Finally, it is worth emphasizing that \adspb outperforms the \madspb \texttt{$n+1$} variant, which corresponds to the current default implementation in \nomad.

\begin{figure}[htb!]
\centering

\begin{subfigure}[t]{0.333\textwidth}
    \centering
    \includegraphics[width=\linewidth]{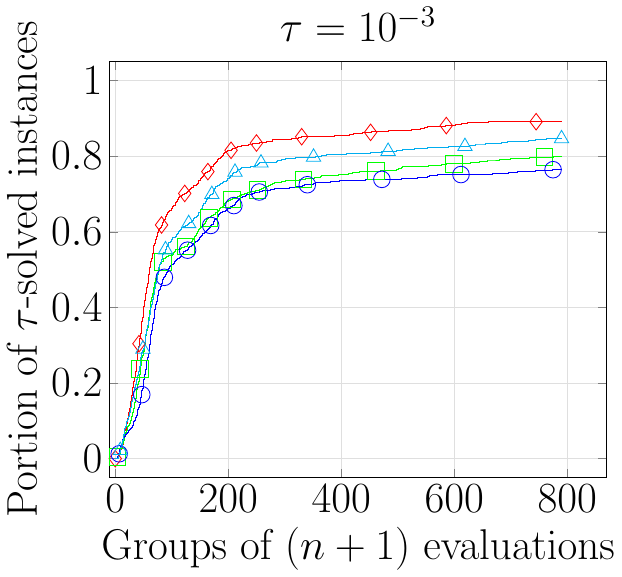}
\end{subfigure}
\hfill
\begin{subfigure}[t]{0.31\textwidth}
    \centering
    \includegraphics[width=\linewidth]{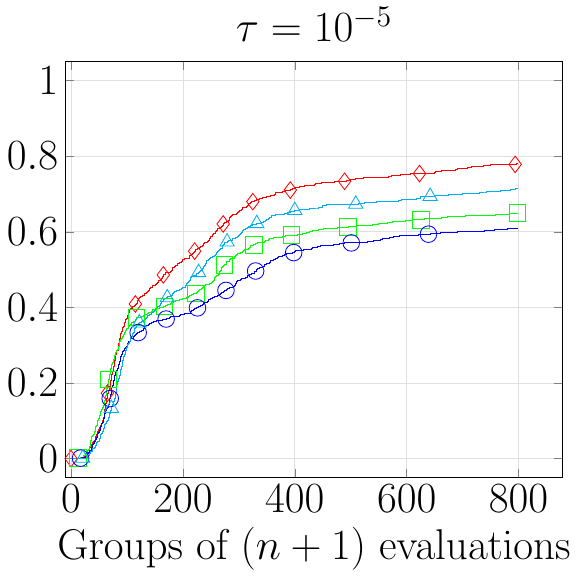}
\end{subfigure}
\hfill
\begin{subfigure}[t]{0.31\textwidth}
    \centering
    \includegraphics[width=\linewidth]{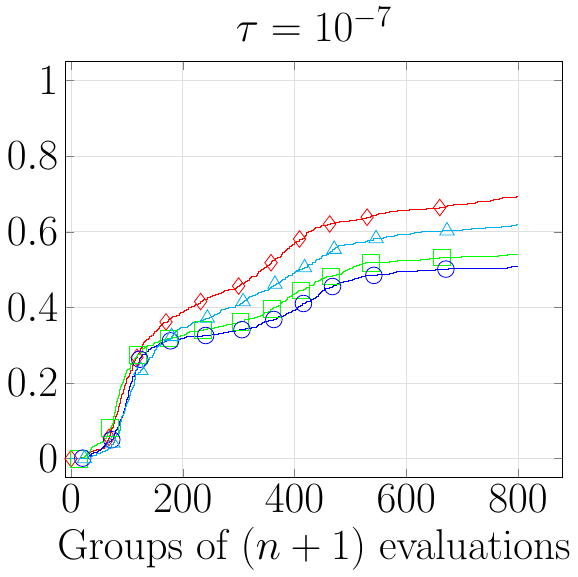}
\end{subfigure}

\vspace{0.8em}

\begin{tikzpicture}
\begin{axis}[
    hide axis,
    xmin=0, xmax=1,
    ymin=0, ymax=1,
    legend columns=4,
    legend style={
        font=\fontsize{10}{11}\selectfont,
        legend cell align=left,
        cells={anchor=west},
        draw=black,
        column sep=0.8em,
        inner xsep=0.4em,
        inner ysep=0.3em
    }
]
  \addlegendimage{line legend, solid, color=red, mark=diamond, mark size=2.8pt}
  \addlegendentry{ADS-PB \texttt{$n+1$}}
  \addlegendimage{line legend, solid, color=green, mark=square, mark size=2.8pt}
  \addlegendentry{MADS-PB \texttt{$n+1$}}
  \addlegendimage{line legend, solid, color=cyan, mark=triangle, mark size=2.8pt}
  \addlegendentry{ADS-PB \texttt{$2n$}}
  \addlegendimage{line legend, solid, color=blue, mark=o, mark size=2.8pt}
  \addlegendentry{MADS-PB \texttt{$2n$}}
\end{axis}
\end{tikzpicture}
\caption{Data profiles on the problems of \cref{tab:problems}. }
\label{fig:analytical}
\end{figure}

\subsection{Results on blackbox problems}

The results from the previous section were generated on a large number of analytical test problem instances.
The present section focuses on time-consuming blackbox optimization problems.
These problems are representative of realistic blackbox settings, where function evaluations are costly and analytical expressions are unavailable. They allow assessing the behavior of the algorithms in more challenging and practical scenarios.
The experiments are conducted using \adspb and \madspb within \nomad, with default configurations combining a quadratic model-based search, a speculative search, a Nelder-Mead search, and an \texttt{$n+1$} poll. These methods are compared with external algorithms from the literature: NSGA-II, implemented in Python through the \texttt{pymoo} library; CMA-ES, implemented using the Python package \texttt{cma}; and LOGDS, using the {\sf MATLAB} implementation made available by the authors of~\cite{BrCuLiSi2024}.

The first problem, referred to as \styrene, is introduced in~\cite{AuBeLe08}. It corresponds to the optimization of a simulated styrene production process which involves several stages, including reactants preparation, catalytic reactions, and separation steps through distillation, with the recycling of unreacted components. The simulation follows a sequential modular approach, where each unit operation is evaluated in sequence, leading to complex dependencies between variables. The resulting optimization problem is constrained, with $n=8$ variables and $m=11$ inequality constraints, including four non relaxable constraints.
To account for variability and robustness, this problem is solved from $30$ different feasible initial points\footnote{Starting points from \url{https://github.com/bbopt/styrene}} and using $3$ different random seeds.

The second problem is derived from the solar thermal power plant simulator introduced in~\cite{solar_paper},
which is designed as a benchmark for blackbox optimization. The instance considered here is \solar{6}, corresponding to a constrained cost minimization problem with $n=5$ variables and $m=6$ inequality constraints. 
Due to the high computational cost of function evaluations, only $30$ problem instances are considered for each algorithm. These correspond to $30$ initial points generated via Latin hypercube sampling\footnote{Starting points from \url{https://github.com/bbopt/solar}}, as provided in the associated benchmark, and a single random seed is used. The overall computational effort for the \solar{6} runs is approximately four CPU-days.

\Cref{fig:blackbox_profiles} displays the data profiles obtained on the \styrene and \solar{6} problems, respectively, for tolerances $\tau \in \{ 10^{-2}, 10^{-3}, 10^{-4}\}$. These tolerances are less strict than those considered in the previous section, reflecting the higher difficulty and computational cost of \solar{6}.

\begin{figure}[htb!]
\centering

\begin{subfigure}[t]{0.333\textwidth}
    \centering
    \includegraphics[width=\linewidth]{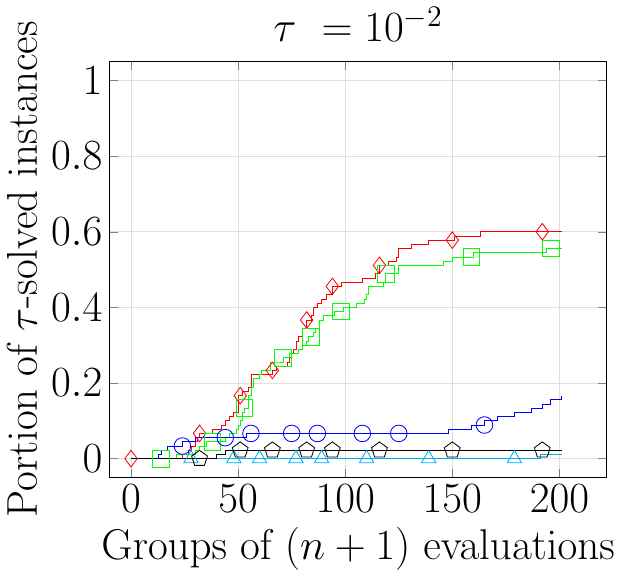}
\end{subfigure}
\hfill
\begin{subfigure}[t]{0.31\textwidth}
    \centering
    \includegraphics[width=\linewidth]{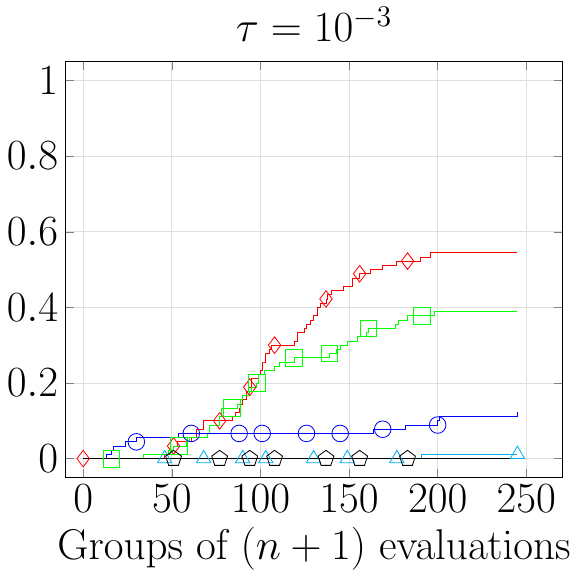}
\end{subfigure}
\hfill
\begin{subfigure}[t]{0.31\textwidth}
    \centering
    \includegraphics[width=\linewidth]{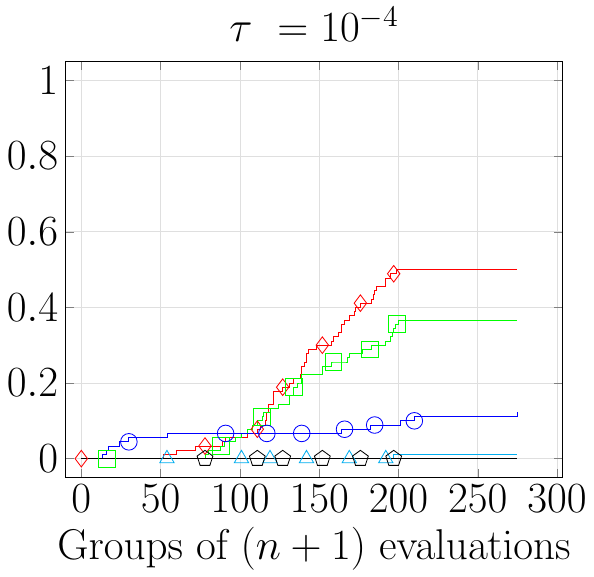}
\end{subfigure}

\vspace{0.4em}

{\small (a)~ \styrene }

\vspace{1em}

\begin{subfigure}[t]{0.333\textwidth}
    \centering
    \includegraphics[width=\linewidth]{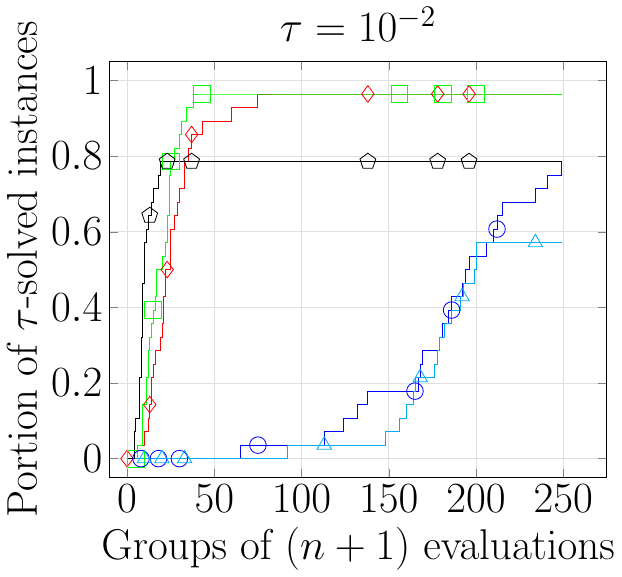}
\end{subfigure}
\hfill
\begin{subfigure}[t]{0.31\textwidth}
    \centering
    \includegraphics[width=\linewidth]{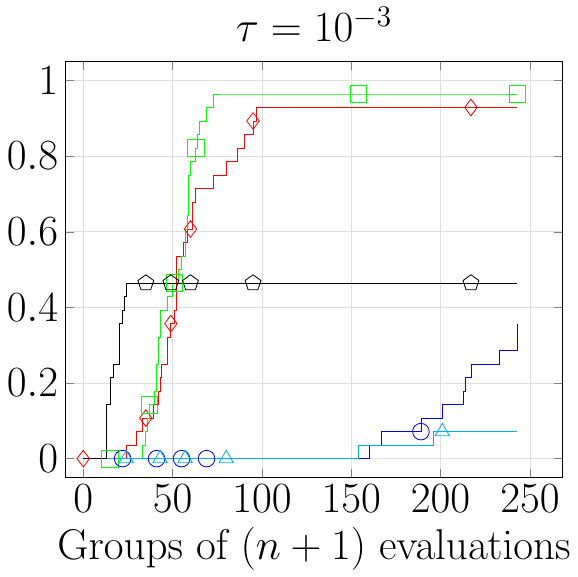}
\end{subfigure}
\hfill
\begin{subfigure}[t]{0.31\textwidth}
    \centering
    \includegraphics[width=\linewidth]{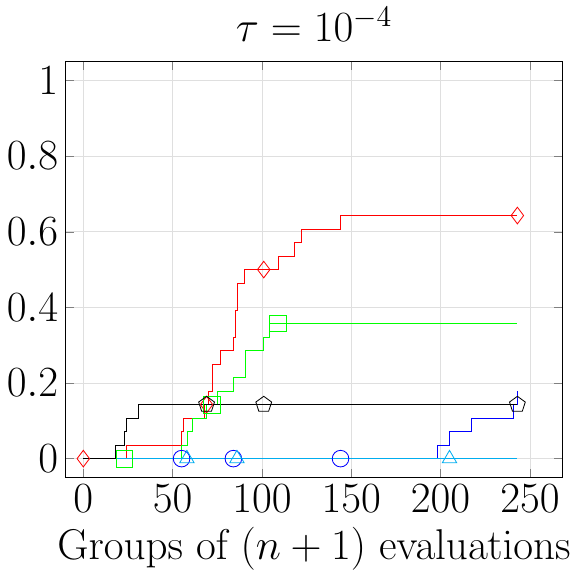}
\end{subfigure}

\vspace{0.4em}

{\small (b)~ \solar{6}}

\vspace{0.8em}

\begin{tikzpicture}
\begin{axis}[
    hide axis,
    xmin=0, xmax=1,
    ymin=0, ymax=1,
    legend columns=5,
    legend style={
        font=\fontsize{10}{12}\selectfont,
        legend cell align=left,
        cells={anchor=west},
        draw=black,
        column sep=1.2em
    }
]
  \addlegendimage{line legend, solid, color=red, mark=diamond, mark size=3.5pt}
  \addlegendentry{ADS-PB}
  \addlegendimage{line legend, solid, color=green, mark=square, mark size=3.5pt}
  \addlegendentry{MADS-PB}
\addlegendimage{line legend, solid, color=cyan, mark=triangle, mark size=3.5pt}
  \addlegendentry{NSGA-II}
  \addlegendimage{line legend, solid, color=blue, mark=o, mark size=3.5pt}
  \addlegendentry{CMA-ES}
  \addlegendimage{line legend, solid, color=black, mark=pentagon, mark size=4pt}
  \addlegendentry{LOGDS}
\end{axis}
\end{tikzpicture}

\caption{Data profiles on the \styrene and \solar{6} problems.}
\label{fig:blackbox_profiles}
\end{figure}

On the \styrene problem, \adspb consistently outperforms \madspb across all tolerances, $\tau$-solving more problems within a given computational budget. 
The relatively poor performance of LOGDS on this problem is explained by the fact that the four non quantifiable constraints are often violated, and blackbox evaluations often fail due to hidden constraints. 
Since LOGDS is not designed to handle non-relaxable and non-quantifiable constraints, infeasible evaluations provide little to no useful information, which significantly degrades its performance. 
The performance of CMA-ES and NSGA-II is also markedly lower, highlighting the difficulty of handling such constrained blackbox settings with general-purpose evolutionary methods. In contrast, \madspb remains competitive for moderate tolerances ($\tau = 10^{-2}$ and $10^{-3}$), but a clear gap emerges at stricter tolerances ($\tau = 10^{-4}$), where \adspb shows a strong advantage. 

On \solar{6}, \adspb and \madspb again dominate the other methods, although the performance gap is less pronounced. One explanation is that LOGDS does not benefit from the full range of search and poll mechanisms available in \nomad, such as the efficient $n+1$ poll using quadratic models to form the directions. 
Additionally, LOGDS appears to converge prematurely, which may limit its ability to fully exploit the available computational budget and refine solution quality. 
As with \styrene, CMA-ES and NSGA-II perform significantly worse, confirming the challenge posed by these constrained blackbox problems. 
Overall, while \madspb remains competitive at moderate tolerances, \adspb maintains a consistent advantage when higher accuracy is required.

This improved performance of \adspb over \madspb may be attributed to the effective use of quadratic models that exploit locally smooth behaviour even when the global problem is nonsmooth, especially since search points are evaluated directly without projecting on the mesh.

\section{Discussion}
\label{sec-discussion}

This work introduces \adspb, an extension of \ads to constrained blackbox optimization problems with quantifiable and relaxable constraints. 
By combining the PB mechanism with \ads, the proposed framework balances feasibility and objective function decrease while avoiding mesh projections and sufficient decrease conditions.
The implementation within \nomad enables a direct comparison with \madspb under identical algorithmic components. 

The computational results indicate that \adspb can achieve improved performance, especially at higher accuracy levels, likely due to its ability to directly evaluate high-quality trial points. 
Beyond these results, \adspb provides a flexible structure that can accommodate model-based and hybrid strategies while maintaining convergence guaranties under standard assumptions. 

Future work will investigate hybrid strategies that combine ADS with sufficient decrease mechanisms to possibly derive complexity guarantees. 
Moreover, another direction is to develop a fusion of MADS-PIP~\cite{AuBrDiLeDSiTr26} and \adspb, with the aim of efficiently leveraging both relaxable equality and inequality constraints. Another promising direction is the design of new search strategies that explicitly exploit the absence of mesh projection in \adspb, together with an analysis of their theoretical impact. 


\paragraph{Acknowledments}
This research is funded by the NSERC discovery grants  
    RGPIN-2026-05340 (Audet),
    RGPIN-2024-05093 (Diouane)
    and RGPIN-2024-05086 (Le~Digabel).
\paragraph{Conflict of interests}
The authors declare that they have no conflicts of interest.

\paragraph{Data availability}
Not applicable.

\paragraph{Use of AI}
The authors used ChatGPT to assist with figure layout and to improve English formulations.
ChatGPT was not used in any way for the scientific content.


\bibliographystyle{plainurl}
\small
\bibliography{bibliography}

@book{AuHa2026,
    address	= {Cham, Switzerland},
	author		= {C. Audet and W. Hare},
	Title		= {{Derivative-Free and Blackbox Optimization}},
	Series		= {{Springer Series in Operations Research and Financial Engineering}},
	Year		= {2026},
	Edition		= {Second},
	Publisher 	= {Springer International Publishing},
	Pages		= {425},
	DOI 		= {10.1007/978-3-032-00906-7}
}

@book{CoScVibook,
  address       = {Philadelphia},
  author        = {A.R. Conn and K. Scheinberg and L.N. Vicente},
  doi           = {10.1137/1.9780898718768},
  isbn          = {978-0-898716-68-9},
  publisher     = {SIAM},
  series        = {MOS-SIAM Series on Optimization},
  title         = {{Introduction to Derivative-Free Optimization}},
  url           = {https://doi.org/10.1137/1.9780898718768},
  year          = {2009}
}

@book{Clar83a,
  address   = {New York},
  author    = {F.H. Clarke},
  note      = {Reissued in 1990 by SIAM Publications, Philadelphia,
               as Vol.~5 in the series Classics in Applied Mathematics},
  publisher = {John Wiley and Sons},
  title     = {{Optimization and Nonsmooth Analysis}},
  url       = {http://www.ec-securehost.com/SIAM/CL05.html},
  year      = {1983}
}

@book{Jahn94a,
  Author    = {J. Jahn},
  Title     = {{Introduction to the Theory of Nonlinear Optimization}},
  Publisher = {Springer Cham},
  Edition   = {4th},
  Year      = {2020},
  Doi       = {10.1007/978-3-030-42760-3},
  Url       = {https://doi.org/10.1007/978-3-030-42760-3}
}

@article{AuTr2018,
  Author        = {C. Audet and C. Tribes},
  Title         = {{Mesh-based Nelder-Mead algorithm for inequality constrained optimization}},
  Journal       = {Computational Optimization and Applications},
  Volume        = {71},
  Number        = {2},
  Pages         = {331--352},
  Year          = {2018},
  Doi           = {10.1007/s10589-018-0016-0},
  Url           = {https://doi.org/10.1007/s10589-018-0016-0},
}

@article{BrCuLiSi2024,
  Author  = {A. Brilli and A.L. Custódio and G. Liuzzi and E.J. Silva},
  Title   = {{Nonlinear derivative-free constrained optimization with a penalty-interior point method and direct search}},
  Journal = {Numerical Algorithms},
  Year    = {2026},
  Doi     = {10.1007/s11075-026-02360-5},
  Url     = {https://doi.org/10.1007/s11075-026-02360-5}
}

@article{cmaes,
  Author        = {N. Hansen and A. Ostermeier},
  Title         = {{Completely derandomized self-adaptation in evolution strategies}},
  Journal       = {Evolutionary computation},
  Volume        = {9},
  Number        = {2},
  Pages         = {159--195},
  Year          = {2001},
  Doi           = {10.1162/106365601750190398},
  Url           = {https://doi.org/10.1162/106365601750190398},
}

@techreport{LiuLi2025,
  Author        = {Y. Liu and Y. Li},
  Title         = {{A Model-Based Derivative-Free Optimization Algorithm for Partially Separable Problems}},
  Institution   = {arXiv},
  Number        = {2506.21948},
  Year          = {2025},
  Arxivurl      = {https://doi.org/10.48550/arXiv.2506.21948},
  Url           = {https://doi.org/10.48550/arXiv.2506.21948}
}

@article{Gupta2024,
  Author  = {R. Gupta and Q. Zhang},
  Title   = {{Data-driven decision-focused surrogate modeling}},
  Journal = {AIChE Journal},
  Volume  = {70},
  Number  = {4},
  Pages   = {e18338},
  Year    = {2024},
  Doi     = {10.1002/aic.18338},
  Url     = {https://doi.org/10.1002/aic.18338}
}

@article{Ma2025,
  Author  = {K. Ma and L.M. Rios and H. Zheng and N.V. Sahinidis and S. Rajagopalan},
  Title   = {{Model-and-search: a derivative-free local optimization algorithm}},
  Journal = {Computational Optimization and Applications},
  Volume  = {92},
  Number  = {3},
  Pages   = {889--921},
  Year    = {2025},
  Doi     = {10.1007/s10589-025-00686-9},
  Url     = {https://doi.org/10.1007/s10589-025-00686-9}
}

@article{Karantoumanis2024,
  Author  = {E. Karantoumanis and N. Ploskas},
  Title   = {{Improving derivative-free optimization algorithms through an adaptive sampling procedure}},
  Journal = {Results in Control and Optimization},
  Volume  = {16},
  Pages   = {100460},
  Year    = {2024},
  Doi     = {10.1016/j.rico.2024.100460},
  Url     = {https://doi.org/10.1016/j.rico.2024.100460}
}

@article{Cartis2026,
  Author  = {C. Cartis and L. Roberts},
  Title   = {{Randomized subspace derivative-free optimization with quadratic models and second-order convergence}},
  Journal = {Optimization Methods and Software},
  Pages   = {1--28},
  Year    = {2026},
  Doi     = {10.1080/10556788.2025.2601668},
  Url     = {https://doi.org/10.1080/10556788.2025.2601668}
}

@techreport{AuBrDiLeDSiTr26,
  Author      = {C. Audet and A. Brilli and Y. Diouane and S. {Le~Digabel} and E.J. Silva and C. Tribes},
  Title       = {{A penalty-interior point method combined with MADS  for equality and inequality constrained optimization}},
  Institution = {arXiv},
  Number      = {2601.20811},
  Year        = {2026},
  Arxivurl    = {https://doi.org/10.48550/arXiv.2601.20811},
  Url         = {https://doi.org/10.48550/arXiv.2601.20811}
}

@techreport{denorme-ads-2025,
  Author        = {C. Audet and T. Denorme and Y. Diouane and S. {Le~Digabel} and C. Tribes},
  Title         = {{Adaptive direct search algorithms for constrained optimization}},
  Institution   = {Les cahiers du GERAD},
  Number        = {G-2025-53},
  Year          = {2025},
  Arxivurl      = {https://arxiv.org/abs/2507.23054},
  Doi           = {10.48550/arXiv.2507.23054}
}

@article{solar_paper,
  Author  = {N. Andr\'{e}s-Thi\'{o} and C. Audet and M. Diago and A.E. Gheribi and S. {Le~Digabel} and X. Lebeuf and M. {Lemyre~Garneau} and C. Tribes},
  Title   = {{Solar: a solar thermal power plant simulator for blackbox optimization benchmarking}},
  Journal = {Optimization and Engineering},
  Volume  = {26},
  Number  = {3},
  Pages   = {1815--1861},
  Doi     = {10.1007/s11081-024-09952-x},  
  Url     = {https://doi.org/10.1007/s11081-024-09952-x},
  Year    = {2025}
}

@article{DzRiRoZe2025,
  Author  = {K.J. Dzahini and F. Rinaldi and C.W. Royer and D. Zeffiro},
  Title   = {{Direct-search methods in the year 2025: Theoretical guarantees and algorithmic paradigms}},
  Journal = {EURO Journal on Computational Optimization},
  Volume  = {13},
  Pages   = {100110},
  Year    = {2025},
  Doi     = {10.1016/j.ejco.2025.100110},
  Url     = {https://doi.org/10.1016/j.ejco.2025.100110}
}

@article{BeSoVi2023,
  author        = {A.S. Berahas and O. Sohab and  L.N. Vicente},
  doi           = {10.1080/10556788.2022.2142582},
  journal       = {Optimization Methods and Software},
  number        = {2},
  pages         = {386--411},
  title         = {{Full-low evaluation methods for derivative-free optimization}},
  url           = {https://doi.org/10.1080/10556788.2022.2142582},
  volume        = {38},
  year          = {2023}
}

@article{G-2025-36,
  Author        = {C. Audet and W. Hare and C. Tribes},
  Title         = {{A summary of benchmarking constrained, multi-objective and surrogate-assisted optimization methods}},
  Journal       = {Optimization Letters},
  Volume        = {},
  Number        = {},
  Pages         = {},
  Year          = {2026},
  Doi           = {10.1007/s11590-026-02302-z},
  Url           = {https://doi.org/10.1007/s11590-026-02302-z},
}

@article{Rock80a,
  author        = {R.T. Rockafellar},
  journal       = {Canadian Journal of Mathematics},
  number        = {2},
  pages         = {257--280},
  title         = {{Generalized directional derivatives and subgradients of nonconvex functions}},
  volume        = {32},
  year          = {1980}
}

@article{LedWild2015,
  author        = {S. {Le~Digabel} and S.M. Wild},
  doi           = {10.1007/s11081-023-09839-3},
  journal       = {Optimization and Engineering},
  number        = {2},
  pages         = {1125--1143},
  title         = {{A taxonomy of constraints in black-box simulation-based optimization}},
  url           = {https://doi.org/10.1007/s11081-023-09839-3},
  volume        = {25},
  year          = {2024}
}

@article{DIOUANE2021100001,
  author        = {Y. Diouane},
  doi           = {10.1016/j.ejco.2020.100001},
  journal       = {EURO Journal on Computational Optimization},
  pages         = {100001},
  title         = {{A merit function approach for evolution strategies}},
  url           = {https://doi.org/10.1016/j.ejco.2020.100001},
  volume        = {9},
  year          = {2021}
}

@article{diouane2022trego,
  author        = {Y. Diouane and V. Picheny and R. {Le~Riche} and A. {Scotto~Di~Perrotolo}},
  doi           = {10.1007/s10898-022-01245-w},
  journal       = {Journal of Global Optimization},
  pages         = {1--23},
  title         = {{TREGO: a trust-region framework for efficient global optimization}},
  url           = {https://doi.org/10.1007/s10898-022-01245-w},
  year          = {2022}
}

@article{nomad4paper,
  author        = {C. Audet and S. {Le~Digabel} and V. {Rochon~Montplaisir} and C. Tribes},
  doi           = {10.1145/3544489},
  journal       = {{ACM} Transactions on Mathematical Software},
  number        = {3},
  pages         = {35:1--35:22},
  title         = {{Algorithm~1027: NOMAD version~4: Nonlinear optimization with the MADS algorithm}},
  url           = {https://doi.org/10.1145/3544489},
  volume        = {48},
  year          = {2022}
}

@article{CoLed2011,
  author        = {A.R. Conn and S. {Le~Digabel}},
  doi           = {10.1080/10556788.2011.623162},
  journal       = {Optimization Methods and Software},
  number        = {1},
  pages         = {139--158},
  title         = {{Use of quadratic models with mesh-adaptive direct search for constrained black box optimization}},
  url           = {https://doi.org/10.1080/10556788.2011.623162},
  volume        = {28},
  year          = {2013}
}

@article{AlAuGhKoLed2020,
  author        = {S. Alarie and C. Audet and A.E. Gheribi and M. Kokkolaras and S. {Le~Digabel}},
  doi           = {10.1016/j.ejco.2021.100011},
  journal       = {EURO Journal on Computational Optimization},
  pages         = {100011},
  title         = {{Two decades of blackbox optimization applications}},
  url           = {https://doi.org/10.1016/j.ejco.2021.100011},
  volume        = {9},
  year          = {2021}
}

@inproceedings{Deb2000,
  address       = {Berlin, Heidelberg},
  author        = {K. Deb and S. Agrawal and A. Pratap and T. Meyarivan},
  booktitle     = {{Parallel Problem Solving from Nature PPSN VI}},
  doi           = {10.1007/3-540-45356-3_83},
  editor        = {M. Schoenauer and K. Deb and G. Rudolph and X. Yao and E. Lutton and J.J. Merelo and H.P. Schwefel, Hans-Paul},
  pages         = {849--858},
  publisher     = {Springer},
  title         = {{A Fast Elitist Non-dominated Sorting Genetic Algorithm for Multi-objective Optimization: NSGA-II}},
  url           = {https://doi.org/10.1007/3-540-45356-3_83},
  year          = {2000}
}

@article{MoWi2009,
  author        = {J.J. Mor\'e and S.M. Wild},
  doi           = {10.1137/080724083},
  journal       = {SIAM Journal on Optimization},
  number        = {1},
  pages         = {172--191},
  title         = {{Benchmarking Derivative-Free Optimization Algorithms}},
  url           = {https://doi.org/10.1137/080724083},
  volume        = {20},
  year          = {2009}
}

@article{LaMeWi2019,
  author        = {J. Larson and M. Menickelly and S.M. Wild},
  doi           = {10.1017/S0962492919000060},
  journal       = {Acta Numerica},
  pages         = {287--404},
  title         = {{Derivative-free optimization methods}},
  url           = {https://doi.org/10.1017/S0962492919000060},
  volume        = {28},
  year          = {2019}
}

@article{AuBeLe08,
  author        = {C. Audet and V. B\'echard and S. {Le~Digabel}},
  doi           = {10.1007/s10898-007-9234-1},
  journal       = {Journal of Global Optimization},
  number        = {2},
  pages         = {299--318},
  title         = {{Nonsmooth optimization through Mesh Adaptive Direct Search
                   and Variable Neighborhood Search}},
  url           = {https://doi.org/10.1007/s10898-007-9234-1},
  volume        = {41},
  year          = {2008}
}

@article{AuDe09a,
  author        = {C. Audet and J.E. {Dennis, Jr.}},
  doi           = {10.1137/070692662},
  journal       = {SIAM Journal on Optimization},
  number        = {1},
  pages         = {445--472},
  title         = {{A Progressive Barrier for Derivative-Free Nonlinear Programming}},
  url           = {https://doi.org/10.1137/070692662},
  volume        = {20},
  year          = {2009}
}

@article{AuDe2006,
  author        = {C. Audet and J.E. {Dennis, Jr.}},
  doi           = {10.1137/040603371},
  journal       = {SIAM Journal on Optimization},
  number        = {1},
  pages         = {188--217},
  title         = {{Mesh Adaptive Direct Search Algorithms for Constrained Optimization}},
  url           = {https://doi.org/10.1137/040603371},
  volume        = {17},
  year          = {2006}
}

@incollection{CuScVi2017,
  address       = {Philadelphia},
  author        = {A.L. Cust{\'o}dio and K. Scheinberg and L.N. Vicente},
  booktitle     = {{Advances and Trends in Optimization with Engineering Applications}},
  chapter       = {37},
  editor        = {T. Terlaky and M.F. Anjos and S. Ahmed},
  publisher     = {SIAM},
  series        = {{MOS-SIAM Book Series on Optimization}},
  title         = {{Methodologies and software for derivative-free optimization}},
  url           = {http://www.mat.uc.pt/~lnv/papers/dfo-survey.pdf},
  year          = {2017}
}

@article{AuCoLedPey2016,
  author        = {C. Audet and A.R. Conn and S. {Le~Digabel} and M. Peyrega},
  doi           = {10.1007/s10589-018-0020-4},
  journal       = {Computational Optimization and Applications},
  number        = {2},
  pages         = {307--329},
  title         = {{A progressive barrier derivative-free trust-region algorithm for constrained optimization}},
  url           = {https://doi.org/10.1007/s10589-018-0020-4},
  volume        = {71},
  year          = {2018}
}

@article{GrVi2014,
  author        = {S. Gratton and L.N. Vicente},
  doi           = {10.1137/130917661},
  journal       = {SIAM Journal on Optimization},
  number        = {4},
  pages         = {1980--1998},
  title         = {{A Merit Function Approach for Direct Search}},
  url           = {https://doi.org/10.1137/130917661},
  volume        = {24},
  year          = {2014}
}

@article{KoLeTo03a,
  author        = {T.G. Kolda and R.M. Lewis and V. Torczon},
  doi           = {10.1137/S003614450242889},
  journal       = {SIAM Review},
  number        = {3},
  pages         = {385--482},
  title         = {Optimization by direct search: New perspectives on some classical and modern methods},
  url           = {https://doi.org/10.1137/S003614450242889},
  volume        = {45},
  year          = {2003}
}

@article{AuIaLeDTr2014,
  author        = {C. Audet and A. Ianni and S. {Le~Digabel} and C. Tribes},
  doi           = {10.1137/120895056},
  journal       = {SIAM Journal on Optimization},
  number        = {2},
  pages         = {621--642},
  title         = {{Reducing the Number of Function Evaluations in Mesh Adaptive Direct Search Algorithms}},
  url           = {https://doi.org/10.1137/120895056},
  volume        = {24},
  year          = {2014}
}

@article{AuDe03a,
  author        = {C. Audet and J.E. {Dennis, Jr.}},
  doi           = {10.1137/S1052623400378742},
  journal       = {SIAM Journal on Optimization},
  number        = {3},
  pages         = {889--903},
  title         = {{Analysis of Generalized Pattern Searches}},
  url           = {https://doi.org/10.1137/S1052623400378742},
  volume        = {13},
  year          = {2003}
}

@article{AbAuDeLe09,
  author        = {M.A. Abramson and C. Audet and J.E. {Dennis, Jr.} and S. {Le~Digabel}},
  doi           = {10.1137/080716980},
  journal       = {SIAM Journal on Optimization},
  number        = {2},
  pages         = {948--966},
  publisher     = {SIAM},
  title         = {{OrthoMADS: A Deterministic MADS Instance with Orthogonal Directions}},
  url           = {https://doi.org/10.1137/080716980},
  volume        = {20},
  year          = {2009}
}

@article{Davi54b,
  author  = {C. Davis},
  journal = {American Journal of Mathematics},
  pages   = {733--746},
  title   = {Theory of Positive Linear Dependence},
  url     = {http://www.ams.org/mathscinet-getitem?mr=16:211e},
  volume  = {76},
  year    = {1954}
}
\label{sec-refs}
\pdfbookmark[1]{References}{sec-refs}

\end{document}